\begin{document}

\title*{Three proofs of the Benedetto--Fickus theorem}
\author{Dustin G.\ Mixon, Tom Needham, Clayton Shonkwiler, Soledad Villar}
\institute{Dustin G.\ Mixon \at The Ohio State University, Columbus, OH, \email{mixon.23@osu.edu}
\and Tom Needham \at Florida State University, Tallahassee, FL, \email{tneedham@fsu.edu}
\and Clayton Shonkwiler \at Colorado State University, Fort Collins, CO, \email{clayton.shonkwiler@colostate.edu}
\and Soledad Villar \at Johns Hopkins University, Baltimore, MD, \email{soledad.villar@jhu.edu}}
%
%
\maketitle


\abstract{In 2003, Benedetto and Fickus introduced a vivid intuition for an objective function called the frame potential, whose global minimizers are fundamental objects known today as unit norm tight frames.
Their main result was that the frame potential exhibits no spurious local minimizers, suggesting local optimization as an approach to construct these objects.
Local optimization has since become the workhorse of cutting-edge signal processing and machine learning, and accordingly, the community has identified a variety of techniques to study optimization landscapes.
This chapter applies some of these techniques to obtain three modern proofs of the Benedetto--Fickus theorem.}

\section{Introduction}
\label{sec.intro}

Frame theory is the study of overcomplete systems of vectors in a Hilbert space.
Much of the last 20 years of research in finite frame theory has concerned various modifications of the following fundamental object:
(the sequence of column vectors of) $Z=[z_1\cdots z_n]\in\mathbb{C}^{d\times n}$ is a \textbf{unit norm tight frame} if 
\begin{itemize}
\item
$\|z_j\|^2=1$ for every $j\in[n]:=\{1,\ldots,n\}$, and 
\item
$ZZ^*$ is a multiple of the $d\times d$ identity matrix.
\end{itemize}
The multiple is necessarily the \textbf{redundancy} $\frac{n}{d}$ of $Z$ since $\operatorname{tr}(ZZ^*)=\operatorname{tr}(Z^*Z)=n$.
Given a $d\times n$ unit norm tight frame, it necessarily holds that
\[
n
\geq \operatorname{rank}(Z)
=\operatorname{rank}(ZZ^*)
=\operatorname{rank}(\tfrac{n}{d}I_d)
=d.
\]
The unit norm tight frames with $n=d$ are precisely the orthonormal bases for $\mathbb{C}^d$.
When $n>d$, a unit norm tight frame $Z\in\mathbb{C}^{d\times n}$ provides both an encoding $y:=Z^*x$ of data $x\in\mathbb{C}^d$ that is robust to various types of errors~\cite{GoyalKK:01,CasazzaK:03,HolmesP:04} (thanks in part to the redundancy $\frac{n}{d}>1$) and a painless reconstruction formula
\[
x
=\frac{d}{n}ZZ^*x
=\frac{d}{n}Zy
\]
(thanks to the resemblance $ZZ^*=\frac{n}{d}I_d$ to orthonormal bases).

Historically, frames were introduced in the setting of infinite-dimensional Hilbert space by Duffin and Schaeffer~\cite{DuffinS:52} in 1952 in the development of non-harmonic Fourier series.
In 1986, Daucechies, Grossmann, and Meyer~\cite{DaubechiesGY:86} further developed the theory of frames in the context of signal processing.
In the time since, finite-dimensional treatments of the subject have proved interesting in their own right.

Today, certain unit norm tight frames are known to emerge as optimal frames under various objectives, for example, when hunting for $n$ points on the unit sphere in $\mathbb{C}^d$ for which the absolute values of the inner products between pairs of points are uniformly as small as possible~\cite{Welch:74,StrohmerH:03,FickusM:15}.
In this chapter, we focus on an objective for which the $d\times n$ unit norm tight frames are \textit{precisely} the global minimizers.
Let $\operatorname{S}(d,n)$ denote the set of matrices $Z\in\mathbb{C}^{d\times n}$ for which every column of $Z$ has unit norm.
(We use $\operatorname{S}$ since each column resides in the \textit{sphere}.)
The \textbf{frame potential} $\operatorname{FP}\colon\operatorname{S}(d,n)\to\mathbb{R}$ is defined by
\[
\operatorname{FP}(Z):=\|Z^*Z\|_F^2,
\]
where $\|\cdot\|_F$ denotes the Frobenius matrix norm.

\begin{proposition}[Welch~\cite{Welch:74}]
For every $Z\in\operatorname{S}(d,n)$, it holds that $\operatorname{FP}(Z)\geq\frac{n^2}{d}$, with equality precisely when $Z$ is a unit norm tight frame.
\end{proposition}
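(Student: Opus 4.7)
The plan is to reduce the inequality to a standard Cauchy--Schwarz bound on the eigenvalues of the Gram-like matrix $ZZ^*$. Concretely, I would first exploit the identity $\|Z^*Z\|_F^2 = \operatorname{tr}((Z^*Z)^2) = \operatorname{tr}((ZZ^*)^2) = \|ZZ^*\|_F^2$, which follows from the cyclic property of the trace. This swap is the key move: it converts a quantity involving the $n \times n$ matrix $Z^*Z$ (whose diagonal is pinned to $1$ by the column-norm constraint, but whose size depends on $n$) into a quantity involving the much smaller $d \times d$ positive semidefinite matrix $ZZ^*$.

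Next, I would analyze $ZZ^*$ via its spectrum. Let $\lambda_1,\ldots,\lambda_d \geq 0$ be its eigenvalues. Because $Z \in \operatorname{S}(d,n)$, each column of $Z$ is a unit vector, so
\[
\sum_{i=1}^d \lambda_i = \operatorname{tr}(ZZ^*) = \operatorname{tr}(Z^*Z) = \sum_{j=1}^n \|z_j\|^2 = n.
\]
Simultaneously, $\|ZZ^*\|_F^2 = \sum_{i=1}^d \lambda_i^2$. Then Cauchy--Schwarz (equivalently, the power-mean inequality, or convexity of $t \mapsto t^2$) yields
\[
\sum_{i=1}^d \lambda_i^2 \;\geq\; \frac{1}{d}\Bigl(\sum_{i=1}^d \lambda_i\Bigr)^{\!2} \;=\; \frac{n^2}{d},
\]
giving the desired bound $\operatorname{FP}(Z) \geq n^2/d$.

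For the equality characterization, Cauchy--Schwarz is tight precisely when $\lambda_1 = \cdots = \lambda_d$, and combined with the trace constraint this forces each $\lambda_i = n/d$. Since $ZZ^*$ is Hermitian and all its eigenvalues equal $n/d$, we get $ZZ^* = \tfrac{n}{d} I_d$, which is exactly the tightness condition in the definition of a unit norm tight frame; conversely, a UNTF clearly saturates the bound.

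I do not anticipate a serious obstacle: the argument is a one-line spectral reformulation followed by Cauchy--Schwarz, and the equality case is automatic from the equality case of Cauchy--Schwarz. The only ``choice'' in the write-up is whether to phrase the inequality via eigenvalues, via the variance identity $\sum \lambda_i^2 - \tfrac{1}{d}(\sum \lambda_i)^2 = \sum (\lambda_i - \bar\lambda)^2 \geq 0$, or directly via $\|ZZ^* - \tfrac{n}{d}I_d\|_F^2 \geq 0$ expanded out; these are equivalent, and I would likely use the last formulation since it makes both the inequality and its equality condition manifest in a single line.
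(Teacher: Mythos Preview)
Your proposal is correct, and in fact your final preferred formulation---expanding $\|ZZ^* - \tfrac{n}{d}I_d\|_F^2 \geq 0$---is exactly the one-line proof the paper gives. The eigenvalue/Cauchy--Schwarz route you describe first is an equivalent unpacking of that same computation, as you note.
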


\begin{proof}
Rearrange the inequality
\[
0
\leq\|ZZ^*-\tfrac{n}{d}I_d\|_F^2
=\|ZZ^*\|_F^2-2\langle ZZ^*,\tfrac{n}{d}I_d\rangle+\|\tfrac{n}{d}I_d\|_F^2
=\operatorname{FP}(Z)-\tfrac{n^2}{d},
\]
and observe that equality holds precisely when $ZZ^*=\frac{n}{d}I_d$.
\smartqed
\end{proof}

We note that when $n<d$, Welch's bound is dominated by the simpler bound
\[
\operatorname{FP}(Z)\geq n,
\]
and equality is achieved precisely when the columns of $Z$ are orthonormal.

In~\cite{BenedettoF:03}, Benedetto and Fickus introduced the idea of locally minimizing the frame potential, much like how point charges confined to a sphere will dynamically repel each other in order to locally minimize a Coulomb potential.
For context, there are several longstanding open problems that concern minimizing a given potential of a point configuration on the sphere, to include the Thomson problem~\cite{Thomson:04}, the Tammes problem~\cite{Tammes:30}, and Smale's 7th problem~\cite{Smale:98}.
The difficulty with these problems stems from what appear to be exponentially many spurious local minimizers, i.e., local minimizers that are not global minimizers.
Perhaps surprisingly, the frame potential does not share this defect:

\begin{proposition}[Benedetto--Fickus theorem~\cite{BenedettoF:03}]
For each $d,n\in\mathbb{N}$, the frame potential $\operatorname{FP}\colon\operatorname{S}(d,n)\to\mathbb{R}$ has no spurious local minimizers.
\end{proposition}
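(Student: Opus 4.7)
My plan is direct Riemannian optimization on the product-of-spheres manifold $\operatorname{S}(d,n)$: use first-order optimality to classify critical points of $\operatorname{FP}$, then construct an explicit second-order descent direction at any non-UNTF critical point via a coordinated multi-column perturbation. First I would compute the Euclidean gradient $\nabla \operatorname{FP}(Z) = 4ZZ^*Z$ from $\operatorname{FP}(Z) = \operatorname{tr}((ZZ^*)^2)$ and project onto the tangent space $\{W = [w_1 \cdots w_n] : \operatorname{Re}\langle w_k, z_k\rangle = 0 \ \forall k\}$, so the critical-point condition becomes $ZZ^* z_k = \alpha_k z_k$ for each $k$. Thus every column of a critical $Z$ is an eigenvector of the frame operator $S := ZZ^*$. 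Grouping columns by eigenspace as $J_i = \{k : z_k \in V_i\}$ with $|J_i| = n_i$ and $\dim V_i = d_i$, and matching $S = \sum_i \sum_{k \in J_i} z_k z_k^* = \sum_i \lambda_i P_{V_i}$ via orthogonality of distinct eigenspaces, I would obtain $\sum_{k \in J_i} z_k z_k^* = \lambda_i P_{V_i}$: the columns of any critical $Z$ split orthogonally into unit norm tight frames for the $V_i$'s with frame constants $\lambda_i = n_i/d_i \geq 1$.

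Next, suppose $Z$ is critical but not a UNTF, so some pair of eigenvalues satisfies $\lambda_\ell > \lambda_m$, forcing $\lambda_\ell > 1$ and hence $n_\ell > d_\ell$. Consider tangent vectors $W$ supported on $J_\ell$ with $w_k \in V_m$ (automatically tangent since $V_m \perp V_\ell$). Expanding $\operatorname{FP}$ along the geodesic with initial velocity $W$, the first-order term vanishes by criticality and the second-order term reduces to
$$\operatorname{FP}(Z(t)) - \operatorname{FP}(Z) = t^2\Bigl(-2(\lambda_\ell-\lambda_m)\sum_{k \in J_\ell} \|w_k\|^2 + \|C\|_F^2\Bigr) + O(t^3),$$
where $C := \sum_{k \in J_\ell}(z_k w_k^* + w_k z_k^*)$. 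Using $V_\ell \perp V_m$, a short calculation gives $\|C\|_F^2 = 2\operatorname{tr}((Z_\ell^*Z_\ell)(W^*W))$, with $Z_\ell = [z_k]_{k \in J_\ell}$ and $W = [w_k]_{k \in J_\ell}$. The crucial step is to restrict to those $W$ satisfying the linear constraint $Z_\ell W^* = 0$: this forces $\|C\|_F^2 = 0$, making the Hessian strictly negative whenever $W \neq 0$. Since $Z_\ell$ has rank $d_\ell < n_\ell$, admissible $W$ form a $\mathbb{C}$-linear subspace of complex dimension $d_m(n_\ell - d_\ell) > 0$, so a nonzero descent direction exists, witnessing that $Z$ is not a local minimum.

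The main obstacle I anticipate is avoiding the ``single-column trap'': the naive one-parameter rotation $z_k(t) = \cos(t) z_k + \sin(t) v$ of a single column into $V_m$ decreases $\operatorname{FP}$ only when $\lambda_\ell - \lambda_m > 1$, which fails at critical points with small spectral gap (e.g., two copies of $e_1$ joined to a Mercedes--Benz frame in an orthogonal $2$-plane, where the gap equals $\frac{1}{2}$). Overcoming this requires precisely the coordinated multi-column perturbation above, and the key insight is that the redundancy $n_\ell > d_\ell$ forced by $\lambda_\ell > 1$ is exactly what makes the constraint $Z_\ell W^* = 0$ admit nonzero solutions. Verifying that this constraint simultaneously kills $\|C\|_F^2$ and admits nontrivial $W$ is the technical heart of the argument.
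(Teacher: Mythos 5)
Your argument is correct and follows essentially the same path as the paper's Section~2 proof: you characterize first-order critical points by the eigenvector condition $ZZ^*z_k=\alpha_k z_k$, decompose the columns by eigenspace to see each block is a unit norm tight frame for its eigenspace, pick a top eigenvalue $\lambda_\ell$ whose redundancy $n_\ell/d_\ell$ exceeds~$1$, and build a second-order descent direction supported on $J_\ell$ with columns in a lower eigenspace $V_m$, using a null vector of $Z_\ell$ to annihilate the positive contribution $\|C\|_F^2$ to the Hessian. The paper reaches the same conclusion via Wirtinger derivatives and Lagrangian second-order conditions rather than a Riemannian geodesic expansion, and its descent direction $Y=uv^*$ is precisely the rank-one instance of your admissible family with $Z_\ell v_{|J_\ell}=0$; your observation that the single-column rotation only works when the spectral gap exceeds~$1$ is a useful motivating remark that the paper leaves implicit. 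One small point of care: not every pair $\lambda_\ell>\lambda_m$ forces $\lambda_\ell>1$ --- you should take $\lambda_\ell=\max\Lambda$ and note that it strictly exceeds $\max\{n/d,1\}$ whenever $Z$ is not a global minimizer, as in the paper's Corollary~\ref{cor.focp feature}.
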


We offer two comments about the results in~\cite{BenedettoF:03}.
First, Benedetto and Fickus consider both the real and complex cases.
We isolate the complex case here because it appears to be more amenable to analysis by a variety of interesting techniques.
Throughout, we discuss to what extent certain arguments can be transferred to the real case.
Second, even if we were to include the real case, Benedetto and Fickus prove a bit more than what is contained in the Benedetto--Fickus theorem.
Specifically, they use Lagrange multipliers to characterize the critical points of the frame potential, and then they find a descent direction for every critical point that is neither orthonormal (in the case where $n\leq d$) nor a unit norm tight frame (in the case where $n\geq d$).
Note that by compactness, a global minimum necessarily exists, which in turn is a critical point with no descent direction, and by the above, this point must be a unit norm tight frame whenever $n\geq d$.
By this argument, Benedetto and Fickus provide a clever implicit proof of the existence of unit norm tight frames and the achievability of Welch's bound.

Regardless, we isolate the Benedetto--Fickus theorem as the main result of~\cite{BenedettoF:03}.
One reason is that this feature of the frame potential clearly distinguishes it from other potentials such as the Coulomb potential.
Second, there are much shorter explicit proofs of the existence of unit norm tight frames (even in the real case): for each $d,n\in\mathbb{N}$ with $n\geq d$, it suffices to select $d$ appropriate rows of the $n\times n$ real discrete Fourier transform matrix and scale the resulting columns to have unit norm.
Indeed, the frame potential is unusual in how easy it is to characterize its global minimizers.
Finally, our statement of the Benedetto--Fickus theorem highlights the relevance of~\cite{BenedettoF:03} to modern trends in signal processing and machine learning.
In particular, much of the last decade of research has centered around analyzing non-convex landscapes for optimization in various settings, such as orthogonal tensor decomposition~\cite{GeHJY:15}, the Burer--Monteiro method~\cite{BoumalVB:16}, dictionary learning~\cite{SunQW:16}, matrix completion~\cite{GeLM:16}, phase retrieval~\cite{SunQW:18}, and neural network training~\cite{LaurentB:18}.
In all of these settings, a primary goal has been to establish the absence of spurious local minimizers so as to explain the empirical performance of local optimization.

The recent literature has introduced many techniques to our community, and in this chapter, we use some of them to prove three different strengthenings of the Benedetto--Fickus theorem.
The proof in the next section is closest to the original, though we use a stronger Lagrange multiplier result to make the proof more routine, and we use Wirtinger derivatives to simplify the necessary calculations.
In Section~3, we apply a general observation that a convex function of an open map has no spurious local minimizers, and we obtain such a factorization of the frame potential using the theory of \textit{eigensteps} developed in~\cite{Cahill:13}.
Finally, Section~4 leverages ideas from geometric invariant theory to show that locally minimizing the frame potential by gradient flow sends almost every initialization to a unit norm tight frame.

\section{Lagrange multipliers and Wirtinger derivatives}
\label{sec.duality}

Lagrange multipliers provide a standard approach for optimization subject to equality constraints.
What follows is a fundamental result in this setting:

\begin{proposition}[Lagrange multiplier theorem, Proposition~3.1.1 in~\cite{Bertsekas:99}\label{prop.lagrange mult}]
Given twice continuously differentiable functions $f,h_1,\ldots,h_m\colon\mathbb{R}^n\to\mathbb{R}$, consider
\[
S:=\{x\in\mathbb{R}^n:h_1(x)=\cdots=h_m(x)=0\},
\]
and suppose $x_0\in S$ is a local minimum of the restriction of $f$ to $S$ with linearly independent constraint gradients $\{\nabla h_i(x_0)\}_{i\in[m]}$.
Then there exists a unique choice of Lagrange multipliers $\lambda_1,\ldots,\lambda_m\in\mathbb{R}$ such that
\begin{equation}
\label{eq.lagrange mult}
\nabla f(x_0)+\sum_{i\in[m]}\lambda_i\nabla h_i(x_0)
=0,
\quad
y^\top\bigg(\nabla^2 f(x_0)+\sum_{i\in[m]}\lambda_i\nabla^2 h_i(x_0)\bigg)y
\geq0
\end{equation}
for all $y\in(\operatorname{span}\{\nabla h_i(x_0)\}_{i\in[m]})^\perp$.
\end{proposition}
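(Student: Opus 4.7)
The plan is to reduce the constrained optimization to an unconstrained one via a local parametrization of the feasible set, and then obtain both conditions by applying standard unconstrained calculus to the reparametrized objective.

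First, I would use the implicit function theorem: since $\{\nabla h_i(x_0)\}_{i \in [m]}$ is linearly independent, the map $h = (h_1, \ldots, h_m)^\top$ has surjective derivative at $x_0$, so after permuting coordinates we can solve $h(x) = 0$ near $x_0$ for $m$ of the variables as a $C^2$ function of the remaining $n - m$. This yields a $C^2$ parametrization $\phi \colon U \to S$, where $U \subset \mathbb{R}^{n-m}$ is a neighborhood of the origin and $\phi(0) = x_0$. By construction $D\phi(0)$ is an isomorphism onto the tangent space $T := (\operatorname{span}\{\nabla h_i(x_0)\}_{i \in [m]})^\perp$, and differentiating $h \circ \phi = 0$ confirms $\operatorname{image}(D\phi(0)) \subseteq T$ (equality follows from dimension count).

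For the first-order condition, define $g := f \circ \phi$. Since $x_0$ is a local minimum of $f|_S$ and $\phi$ parametrizes $S$ near $x_0$, the origin is an unconstrained local minimum of $g$, so $\nabla g(0) = D\phi(0)^\top \nabla f(x_0) = 0$. This says $\nabla f(x_0) \perp T$, so $\nabla f(x_0) \in \operatorname{span}\{\nabla h_i(x_0)\}_{i \in [m]}$, and linear independence forces a unique choice of scalars $\lambda_1, \ldots, \lambda_m$ with $\nabla f(x_0) + \sum_i \lambda_i \nabla h_i(x_0) = 0$.

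For the second-order condition, introduce the Lagrangian $L := f + \sum_i \lambda_i h_i$. Since each $h_i \circ \phi \equiv 0$ on $U$, we have $g = L \circ \phi$, and the chain rule gives
\[
\nabla^2 g(0) \;=\; D\phi(0)^\top \nabla^2 L(x_0) \, D\phi(0) \;+\; \big(\text{terms linear in } \nabla L(x_0)\big).
\]
The first-order condition kills the last term, and $\nabla^2 g(0) \succeq 0$ by the second-order necessary condition for the unconstrained minimum at the origin. Since $D\phi(0)$ maps onto $T$, writing any $y \in T$ as $y = D\phi(0) v$ yields $y^\top \nabla^2 L(x_0) y = v^\top \nabla^2 g(0) v \geq 0$, which is the desired inequality.

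The main obstacle is the second-order step, specifically verifying that the extra chain-rule contribution involving $D^2 \phi(0)$ is annihilated by the first-order condition; this is the mechanism that lets the curvature of the parametrization disappear and lets the bordered Hessian of the Lagrangian appear in its place. A supporting subtlety is ensuring $\phi$ is genuinely $C^2$, which requires invoking the $C^2$ version of the implicit function theorem justified by the $C^2$ hypothesis on the $h_i$'s.
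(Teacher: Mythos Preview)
The paper does not prove this proposition at all: it is quoted verbatim as Proposition~3.1.1 from Bertsekas's \emph{Nonlinear Programming} and used as a black box. So there is no ``paper's own proof'' to compare against.

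That said, your argument is the standard one and is correct. The implicit function theorem gives the $C^2$ chart $\phi$ of the constraint set, the first-order condition follows from $\nabla(f\circ\phi)(0)=0$ exactly as you wrote, and the key second-order step---replacing $f\circ\phi$ by $L\circ\phi$ so that the $D^2\phi(0)$ term in the chain rule is multiplied by $\nabla L(x_0)=0$---is handled correctly. The only thing I would tighten is the claim that $D\phi(0)$ is onto $T$: you assert this by a dimension count, which is fine, but you should state explicitly that $D\phi(0)$ is injective (it is, because in the implicit-function coordinates $\phi(u)=(u,\psi(u))$ for some $C^2$ map $\psi$, so $D\phi(0)$ has an $(n-m)\times(n-m)$ identity block). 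With that, $D\phi(0)$ is a bijection $\mathbb{R}^{n-m}\to T$ and every $y\in T$ is of the form $D\phi(0)v$, which is what the last line needs.
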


This motivates a couple of definitions.
In the setting of Proposition~\ref{prop.lagrange mult}, we say $x_0\in S$ is a \textbf{(first-order) critical point} if the constraint gradients $\{\nabla h_i(x_0)\}_{i\in[m]}$ are linearly dependent or there exist Lagrange multipliers $\lambda_1,\ldots,\lambda_m\in\mathbb{R}$ such that
\[
\nabla f(x_0)+\sum_{i\in[m]}\lambda_i\nabla h_i(x_0)
=0,
\]
and we say $x_0\in S$ is a \textbf{second-order critical point} if $\{\nabla h_i(x_0)\}_{i\in[m]}$ are linearly dependent or there exist $\lambda_1,\ldots,\lambda_m\in\mathbb{R}$ such that \eqref{eq.lagrange mult} holds for all $y\in(\operatorname{span}\{\nabla h_i(x_0)\}_{i\in[m]})^\perp$.
By Proposition~\ref{prop.lagrange mult}, every local minimizer is a second-order critical point, and by relaxation, every second-order critical point is a first-order critical point.
(Indeed, we include the dependence of constraint gradients in the definition of critical points so that this implication holds, but as we will see, such degeneracy never arises in our setting.)

In this section, we simplify the original proof of the Benedetto--Fickus theorem to obtain a proof of the following strengthening:

\begin{theorem}
\label{thm.socp}
For each $d,n\in\mathbb{N}$, the second-order critical points of the frame potential $\operatorname{FP}\colon\operatorname{S}(d,n)\to\mathbb{R}$ are precisely the global minimizers of the frame potential, namely, the $d\times n$ matrices with orthonormal columns when $n\leq d$, and the $d\times n$ unit norm tight frames when $n\geq d$.
\end{theorem}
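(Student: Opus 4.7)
The plan is to apply Proposition~\ref{prop.lagrange mult} with Wirtinger calculus handling the complex structure cleanly. Treating $Z$ and $\overline{Z}$ as independent, one computes $\partial \operatorname{FP}/\partial \overline Z = 2ZZ^*Z$ and $\partial h_j/\partial \overline Z = z_j e_j^\top$, where $h_j(Z) := \|z_j\|^2 - 1$; the constraint gradients have disjoint supports in real coordinates, so the linear-independence hypothesis holds automatically, and the first-order condition reduces columnwise to $ZZ^* z_j = \sigma_j z_j$. Thus at any first-order critical point, each column of $Z$ is an eigenvector of $ZZ^*$. Letting $V_1,\ldots,V_r$ enumerate the nonzero eigenspaces (with dimensions $d_k$ and eigenvalues $\sigma_k > 0$) and setting $J_k := \{j : z_j \in V_k\}$, comparing traces and ranks in $\sum_{j\in J_k}z_j z_j^* = \sigma_k I_{V_k}$ gives $\sigma_k = |J_k|/d_k \geq 1$, with equality exactly when $\{z_j\}_{j\in J_k}$ is an orthonormal basis of $V_k$.

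For the second-order analysis, I would expand $\operatorname{FP}$ along a tangent curve $Z(t) = Z + tW + t^2 C + O(t^3)$ with $\operatorname{Re}(z_j^* w_j) = 0$ and $\operatorname{Re}(z_j^* c_j) = -\|w_j\|^2/2$ (to maintain $\|z_j(t)\|^2 = 1$ to order $t^2$). A direct Wirtinger calculation yields
\[
\operatorname{Hess}[W,W] = 4\operatorname{tr}(ZZ^* WW^*) - 4\sum_{j=1}^n \sigma_j\|w_j\|^2 + 2\operatorname{tr}\bigl((ZW^*+WZ^*)^2\bigr).
\]
Now suppose $Z$ is a first-order critical point that is not a global minimizer. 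The structural claim above forces the existence of eigenspaces $V_a, V_b$ with $\sigma_a > \sigma_b$ (possibly with $V_b$ the kernel, so $\sigma_b = 0$) and $\sigma_a > 1$, so $|J_a| > d_a$ and there is a nontrivial relation $\sum_{j\in J_a} c_j z_j = 0$. Fixing any unit $v \in V_b$, define $W$ by $w_j := \bar c_j v$ for $j\in J_a$ and $w_j := 0$ otherwise; since $v \perp V_a$, the tangency condition holds, and $ZW^* = (\sum_j c_j z_j) v^* = 0$ kills the third Hessian term, leaving
\[
\operatorname{Hess}[W,W] = 4\|c\|^2(\sigma_b - \sigma_a) < 0,
\]
which contradicts second-order criticality. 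The reverse direction is immediate from Proposition~\ref{prop.lagrange mult}, since every global minimizer is in particular a local minimizer.

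I expect the main obstacle to be finding the right descent direction at an arbitrary non-optimal critical point. The naive single-column variation (rotating one column from $V_a$ into $V_b$) produces a Hessian whose leading coefficient is $\sigma_b - \sigma_a + 1$, which is useless once the eigenvalue gap is at most $1$ --- already a live issue for the $d=2$, $n=3$ critical points with eigenvalues $\{2,1\}$. Straightforward two-column analogues produce a zero Hessian in similarly tight cases, such as the $d=4$, $n=5$ critical points whose $V_a$ supports a Mercedes--Benz-type subframe. The key observation is that $\sigma_a > 1$ forces the columns indexed by $J_a$ to be linearly dependent, and any nontrivial dependence yields a tangent variation along which $ZW^* + WZ^* \equiv 0$, suppressing the interaction term in the Hessian and exposing the clean strictly negative quantity $4\|c\|^2(\sigma_b - \sigma_a)$.
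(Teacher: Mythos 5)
Your proposal is correct and follows essentially the same route as the paper: characterize first-order critical points as matrices whose columns are eigenvectors of $ZZ^*$, observe that a non-minimizing critical point has an ``overloaded'' eigenspace $V_a$ (so its columns have a nontrivial relation $c$) and another eigenspace $V_b$ with smaller eigenvalue, then take the rank-one descent direction $W = vc^*$ (the paper's $Y = uv^*$ with roles of $u,v$ relabeled) to make the cross term $ZW^*$ vanish. The only cosmetic difference is that you organize the Hessian computation via a constraint curve $Z(t) = Z + tW + t^2C + O(t^3)$, whereas the paper evaluates the explicit Wirtinger Hessians from Lemma~\ref{lem.wirtinger hessians} and Proposition~\ref{prop.wirtinger socp}; the resulting quantity $4\|c\|^2(\sigma_b - \sigma_a) < 0$ agrees with the paper's $4(\beta-\alpha)$.
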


While we will prove Theorem~\ref{thm.socp} in the complex setting, the same proof works (and is even simpler) in the real setting.
In our setting, we are optimizing the restriction of a function $f\colon\mathbb{C}^n\to\mathbb{R}$, and so to find critical points, we are inclined to take $g\colon\mathbb{R}^n\times\mathbb{R}^n\to\mathbb{C}^n$ defined by $g(x,y):=x+\mathrm{i}y$ and then compute the gradient of the composition $f\circ g$.
Working with this composition can be cumbersome, but as we will see, Wirtinger derivatives nicely encode the desired gradient information in terms of functions over the native space $\mathbb{C}^n$.
This formalism drew widespread attention from the signal processing community when Cand\`{e}s, Li, and Soltanolkotabi used it to analyze the phase retrieval problem in~\cite{CandesLS:15}.
The \textbf{Wirtinger derivative} $\frac{\partial f}{\partial \overline{z}_i}\colon\mathbb{C}^n\to\mathbb{C}$ is defined by
\[
\frac{\partial f}{\partial \overline{z}_i}
:=\frac{1}{2}\bigg(\frac{\partial(f\circ g)}{\partial x_i}+\mathrm{i}\frac{\partial(f\circ g)}{\partial y_i}\bigg)\circ g^{-1},
\]
which in turn determines the \textbf{Wirtinger gradient} $\frac{\partial f}{\partial \overline{z}}\colon\mathbb{C}^n\to\mathbb{C}^n$ defined by
\[
\frac{\partial f}{\partial \overline{z}}:=\bigg(\frac{\partial f}{\partial \overline{z}_1},\ldots,\frac{\partial f}{\partial \overline{z}_n}\bigg).
\]
For example, given $f\colon\mathbb{C}^n\to\mathbb{R}$ defined by $f(z):=\|z\|^2$, we have $(f\circ g)(x,y)=\sum_{j\in[n]}(x_j^2+y_j^2)$, and so $\frac{\partial(f\circ g)}{\partial x_i}(x,y)=2x_i$, $\frac{\partial(f\circ g)}{\partial y_i}(x,y)=2y_i$, and $\frac{\partial f}{\partial \overline{z}}(z)=z$.
Next, the \textbf{gradient} $\nabla f\colon\mathbb{C}^n\to\mathbb{C}^n$ is defined in terms of the best real linear approximation of $f$ at the input $z$:
\[
f(z+\Delta z)
=f(z)+\operatorname{Re}\langle \nabla f(z),\Delta z\rangle+\text{h.o.t.}
\]
Since $g$ is a real linear isometry, it follows that $\nabla (f\circ g)=g^{-1}\circ\nabla f\circ g$, meaning $\nabla f$ is the notion of gradient that is relevant to optimization.
One may verify that
\[
\nabla f=2\frac{\partial f}{\partial \overline{z}}.
\]
For example, for $f(z):=\|z\|^2$, we have $\nabla f(z)=2z$, which nicely resembles the gradient expression when $z$ is real.
The most attractive aspect of Wirtinger derivatives is that if $f(z)$ is a polynomial in $z$ and $\overline{z}$, then $\frac{\partial f}{\partial \overline{z}_i}(z)$ may be computed by formally differentiating in $\overline{z}_i$ as if everything else (including $z_i$) is constant.
For example, for $f(z):=\|z\|^2=\sum_{j\in[n]}z_j\overline{z}_j$, we have $\frac{\partial f}{\partial \overline{z}_i}(z)=z_i$, matching our previous calculation.

\begin{lemma}
\label{lem.euclidean gradient}
Consider the extension $\operatorname{EP}\colon\mathbb{C}^{d\times n}\to\mathbb{R}$ of the frame potential defined by $\operatorname{EP}(Z):=\|Z^*Z\|_F^2$, and for each $j\in[n]$, consider the constraint $h_j\colon\mathbb{C}^{d\times n}\to\mathbb{R}$ defined by $h_j(Z):=\|Ze_j\|^2-1$, where $e_j$ denotes the $j$th standard basis element in $\mathbb{C}^n$.
Then for $Z=[z_1\cdots z_n]$, it holds that
\[
\nabla\operatorname{EP}(Z)=4ZZ^*Z,
\qquad
\nabla h_j(Z)=2z_je_j^*.
\]
\end{lemma}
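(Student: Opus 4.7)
The plan is to compute the Wirtinger gradients directly and invoke the identity $\nabla f = 2\tfrac{\partial f}{\partial\overline{z}}$ established in the preceding discussion. Because both $\operatorname{EP}$ and each $h_j$ are polynomials in the entries of $Z$ and $\overline{Z}$, the Wirtinger derivative with respect to $\overline{Z_{ab}}$ is obtained by formally differentiating while treating every $Z_{ij}$ (and every other $\overline{Z_{ij}}$) as constant.

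For the constraint $h_j(Z) = \|Ze_j\|^2 - 1$, I would first observe that $h_j$ depends only on the $j$th column, and write it out as $h_j(Z) = \sum_{i\in[d]} Z_{ij}\overline{Z_{ij}} - 1$. Differentiating formally in $\overline{Z_{ab}}$ gives $Z_{ab}$ when $b=j$ and $0$ otherwise, so $\tfrac{\partial h_j}{\partial \overline{Z}}(Z)$ is the matrix whose $j$th column is $z_j$ and whose other columns vanish; this is exactly $z_je_j^*$. Multiplying by $2$ yields $\nabla h_j(Z)=2z_je_j^*$.

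For the potential, I would rewrite $\operatorname{EP}(Z) = \|Z^*Z\|_F^2 = \operatorname{tr}((Z^*Z)^2)$ and expand it entrywise as
\[
\operatorname{EP}(Z) = \sum_{i,j,k,l} \overline{Z_{ij}}\,Z_{ik}\,\overline{Z_{lk}}\,Z_{lj}.
\]
Each summand contains exactly two conjugated factors, so differentiating in $\overline{Z_{ab}}$ produces two contributions: one from matching $(i,j)=(a,b)$ and one from matching $(l,k)=(a,b)$. Using the identities $(Z^*Z)_{kb} = \sum_l\overline{Z_{lk}}Z_{lb}$ and $(Z^*Z)_{jb}=\sum_i\overline{Z_{ij}}Z_{ib}$, each contribution collapses to $(ZZ^*Z)_{ab}$, so together $\tfrac{\partial \operatorname{EP}}{\partial \overline{Z_{ab}}}(Z) = 2(ZZ^*Z)_{ab}$, and hence $\nabla\operatorname{EP}(Z) = 2\cdot 2ZZ^*Z = 4ZZ^*Z$.

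I do not anticipate a true obstacle — both computations are routine once the Wirtinger formalism is in place. The only point requiring a little care is to keep track of the two distinct ways a conjugated variable appears in the quartic expansion of $\operatorname{EP}$, which is the source of the factor of $2$ before $ZZ^*Z$.
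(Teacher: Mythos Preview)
Your argument is correct and follows essentially the same route as the paper: expand $\operatorname{EP}$ and $h_j$ entrywise in $Z$ and $\overline{Z}$, compute the Wirtinger derivative $\partial/\partial\overline{Z}$ by formal differentiation, observe that the two conjugated factors in the quartic expansion of $\operatorname{EP}$ each contribute a copy of $(ZZ^*Z)_{ab}$, and then double to pass from the Wirtinger gradient to $\nabla$. The only cosmetic differences are the order in which you treat the two functions and your choice of index labels.
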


\begin{proof}
For $\operatorname{EP}$, we expand
\begin{align*}
\operatorname{EP}(Z)
=\|Z^*Z\|_F^2
=\sum_{i\in[n]}\sum_{j\in[n]}|(Z^*Z)_{ij}|^2
&=\sum_{i\in[n]}\sum_{j\in[n]}\sum_{k\in[d]}\sum_{\ell\in[d]}\overline{Z}_{ki}Z_{kj}Z_{\ell i}\overline{Z}_{\ell j},
\end{align*}
and formally differentiate
\begin{align*}
\frac{\partial\operatorname{EP}}{\partial \overline{Z}_{pq}}(Z)
&=\sum_{i\in[n]}\sum_{j\in[n]}\sum_{k\in[d]}\sum_{\ell\in[d]}(\delta_{pk}\delta_{qi}Z_{kj}Z_{\ell i}\overline{Z}_{\ell j}+\overline{Z}_{ki}Z_{kj}Z_{\ell i}\delta_{p\ell}\delta_{qj})\\
&=\sum_{j\in[n]}\sum_{\ell\in[d]}Z_{pj}Z_{\ell q}\overline{Z}_{\ell j}
+\sum_{i\in[n]}\sum_{k\in[d]}\overline{Z}_{ki}Z_{kq}Z_{p i}\\
&=\sum_{j\in[n]}\sum_{\ell\in[d]}Z_{pj}(Z^*)_{j\ell}Z_{\ell q}
+\sum_{i\in[n]}\sum_{k\in[d]}Z_{p i}(Z^*)_{ik}Z_{kq}
=(2ZZ^*Z)_{pq},
\end{align*}
and then double the Wirtinger gradient to obtain the gradient.
For $h_j$, we similarly expand and formally differentiate
\[
h_j(Z)
=\sum_{i\in[d]}Z_{ij}\overline{Z}_{ij}-1,
\quad
\frac{\partial h_j}{\partial \overline{Z}_{pq}}(Z)
=\sum_{i\in[d]}Z_{ij}\delta_{pi}\delta_{qj}
=Z_{pj}\delta_{qj}
=(z_je_j^*)_{pq},
\]
and then double the Wirtinger gradient to obtain the gradient.
\smartqed
\end{proof}

\begin{proposition}
\label{prop.focp}
For each $d,n\in\mathbb{N}$, the first-order critical points of the frame potential $\operatorname{FP}\colon\operatorname{S}(d,n)\to\mathbb{R}$ are precisely the matrices $Z\in\operatorname{S}(d,n)$ for which every column of $Z$ is an eigenvector of $ZZ^*$.
\end{proposition}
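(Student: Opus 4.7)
The plan is to invoke the Lagrange multiplier theorem (Proposition~\ref{prop.lagrange mult}), interpreted in the complex setting via the Wirtinger framework described above, and combine it with the gradient formulas from Lemma~\ref{lem.euclidean gradient}.

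First, I would verify the non-degeneracy hypothesis of Proposition~\ref{prop.lagrange mult}. The constraint gradients $\nabla h_j(Z)=2z_je_j^*$ are rank-one matrices whose only nonzero column is the $j$th, which equals $2z_j\neq 0$ since $\|z_j\|=1$. Having disjoint column supports, they are linearly independent for every $Z\in\operatorname{S}(d,n)$, so the degenerate branch in the definition of first-order critical point never activates, and genuine real Lagrange multipliers $\lambda_1,\ldots,\lambda_n$ must exist at any first-order critical point.

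With that in hand, the critical point condition $\nabla\operatorname{EP}(Z)+\sum_{j\in[n]}\lambda_j\nabla h_j(Z)=0$ becomes the matrix equation
\[
4ZZ^*Z+2\sum_{j\in[n]}\lambda_j z_j e_j^*=0.
\]
Reading off column $j$ gives $ZZ^*z_j=-\tfrac{\lambda_j}{2}z_j$, which is precisely the statement that $z_j$ is an eigenvector of $ZZ^*$. Conversely, if every column $z_j$ is an eigenvector of $ZZ^*$, say $ZZ^*z_j=\mu_j z_j$, then $\mu_j\in\mathbb{R}$ since $ZZ^*$ is Hermitian (in fact positive semidefinite), and choosing $\lambda_j:=-2\mu_j\in\mathbb{R}$ yields real Lagrange multipliers satisfying the critical point equation column by column.

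I do not anticipate any significant obstacle here: all of the real work has been absorbed into Lemma~\ref{lem.euclidean gradient}, so the argument reduces to the column-by-column extraction above, once the non-degeneracy of the constraint gradients has been noted. The subtler second-order analysis, which involves the bordered Hessian on the tangent space cut out by these $n$ constraints, is deferred to the proof of Theorem~\ref{thm.socp}.
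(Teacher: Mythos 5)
Your proof is correct and follows essentially the same route as the paper's: observe that the constraint gradients $\nabla h_j(Z)=2z_je_j^*$ have disjoint supports (hence are linearly independent), so criticality is equivalent to $\nabla\operatorname{EP}(Z)=4ZZ^*Z$ lying in their real span, which unpacks column-by-column to the eigenvector condition. The paper is slightly terser and leaves the converse implicit, whereas you spell out that the eigenvalues of the Hermitian matrix $ZZ^*$ are automatically real, so the required multipliers $\lambda_j=-2\mu_j$ do exist; this is a worthwhile detail to make explicit, but it is not a different approach.
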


\begin{proof}
For every $Z\in\operatorname{S}(d,n)$, it holds that $\{\nabla h_j(Z)\}_{j\in[n]}$ are nonzero matrices with disjoint support, meaning they are linearly independent.
Thus, $Z$ is a first-order critical point if and only if $\nabla\operatorname{EP}(Z)$ is in the span of $\{\nabla h_j(Z)\}_{j\in[n]}$, if and only if for every $j\in[n]$, the $j$th column of $\nabla\operatorname{EP}(Z)$ is a scalar multiple of the $j$th column of $\nabla h_j(Z)$.
The result follows.
\end{proof}

Now that we have characterized the first-order critical points of the frame potential, we isolate a useful feature of the non-minimizing critical points.
To enunciate this feature, we need another definition:
We say $Z\in\operatorname{S}(d,n)$ is a \textbf{unit norm tight frame for a subspace} $S\subseteq\mathbb{C}^{d}$ if $ZZ^*$ is a multiple of the orthogonal projection matrix with image $S$.
In particular, the multiple equals $n/\operatorname{dim}(S)$ by a trace argument.

\begin{corollary}
\label{cor.focp feature}
Fix $d,n\in\mathbb{N}$, and suppose $Z\in\operatorname{S}(d,n)$ is a first-order critical point of the frame potential $\operatorname{FP}\colon\operatorname{S}(d,n)\to\mathbb{R}$ that is not a global minimizer.
Then there exist $d',n'\in\mathbb{N}$ satisfying
\[
d'<d,
\qquad
\frac{n'}{d'}>\max\bigg\{\frac{n}{d},1\bigg\}
\]
such that the maximum eigenvalue of $ZZ^*$ has a $d'$-dimensional eigenspace $E$ that contains exactly $n'$ columns of $Z$, which in turn form a unit norm tight frame for $E$.
\end{corollary}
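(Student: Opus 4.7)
The plan is to exploit the eigenspace structure of $ZZ^*$ forced by Proposition~\ref{prop.focp}. Write the distinct nonzero eigenvalues of $ZZ^*$ as $\lambda_1>\cdots>\lambda_k>0$ with orthogonal eigenspaces $E_1,\ldots,E_k$ of dimensions $d_i:=\dim E_i$. First I would note that no column $z_j$ of $Z$ can lie in $\ker(ZZ^*)$: otherwise $\|Z^*z_j\|^2=z_j^*(ZZ^*)z_j=0$, yet the $j$th coordinate of $Z^*z_j$ is $\|z_j\|^2=1$. Combined with Proposition~\ref{prop.focp}, each column therefore lies in exactly one $E_i$; let $n_i$ be the number of such columns and let $Z_i$ be the corresponding submatrix.

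The next step is to identify each $Z_i$ as a unit norm tight frame for $E_i$. Since the columns of $Z_i$ lie in $E_i$, the ranges of the $d\times d$ operators $Z_iZ_i^*$ are pairwise orthogonal, so combining $ZZ^*=\sum_i Z_iZ_i^*$ with $ZZ^*|_{E_i}=\lambda_i\operatorname{id}_{E_i}$ yields $Z_iZ_i^*=\lambda_i P_{E_i}$; in particular $n_i\geq d_i$. Taking traces on both sides gives $\lambda_i=n_i/d_i$. This pins down the candidates: take $E:=E_1$, $d':=d_1$, $n':=n_1$, and the unit norm tight frame conclusion is automatic.

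It remains to verify $d'<d$ and $n'/d'>\max\{n/d,1\}$, which I would split cleanly along the two regimes for $Z$. When $n\geq d$, the max-versus-average bound $\lambda_1\geq\operatorname{tr}(ZZ^*)/d=n/d$ is tight only when $ZZ^*=(n/d)I_d$, i.e., only when $Z$ is a unit norm tight frame; since this is excluded, $n'/d'=\lambda_1>n/d=\max\{n/d,1\}$, and the degenerate case $d_1=d$ would force $k=1$ and again produce a unit norm tight frame. When $n<d$, the inequality $n_i\geq d_i$ gives $\operatorname{FP}(Z)=\sum_i n_i^2/d_i\geq\sum_i n_i=n$ with equality only when every $\lambda_i=1$, i.e., only when $Z$ has orthonormal columns; since this is also excluded, some $\lambda_i$ strictly exceeds $1$, hence $\lambda_1>1=\max\{n/d,1\}$, while $d_1\leq\operatorname{rank}(ZZ^*)\leq n<d$ is automatic. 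The main subtlety I anticipate is keeping the two non-minimizer regimes straight, since the obstruction at $d_1=d$ manifests differently in each case; reducing everything to a single scalar inequality on $\lambda_1$ keeps the bookkeeping under control.
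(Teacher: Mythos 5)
Your proof is correct and takes essentially the same route as the paper's: decompose the columns of $Z$ by eigenspace using Proposition~\ref{prop.focp}, observe via the spectral decomposition that each block is a unit norm tight frame for its eigenspace with $\lambda_i=n_i/d_i$, and compare the top eigenvalue to a trace average to get the strict inequalities. The only cosmetic difference is in the $n<d$ case, where the paper directly averages the eigenvalues of $Z^*Z$ to conclude $\lambda_1>1$, while you reach the same bound through the frame potential identity $\operatorname{FP}(Z)=\sum_i n_i^2/d_i$.
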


\begin{proof}
Let $\Lambda$ denote the set of eigenvalues of $ZZ^*$, and let $d'$ denote the multiplicity of $\max\Lambda$ as an eigenvalue of $ZZ^*$.
Since $Z$ is not a unit norm tight frame by assumption, it holds that $d'<d$.
Next, let $P_\lambda$ denote the orthogonal projection matrix onto the eigenspace of $ZZ^*$ corresponding to $\lambda\in\Lambda$.
Then the spectral theorem gives
\[
\sum_{\lambda\in\Lambda}\lambda \frac{\operatorname{tr}P_\lambda}{d}
=\frac{1}{d}\operatorname{tr}\bigg(\sum_{\lambda\in\Lambda}\lambda P_\lambda\bigg)
=\frac{1}{d}\operatorname{tr}(ZZ^*)
=\frac{1}{d}\operatorname{tr}(Z^*Z)
=\frac{n}{d},
\]
i.e., $\frac{n}{d}$ is a weighted average of the eigenvalues of $ZZ^*$.
Since $Z$ is not a unit norm tight frame by assumption, there are at least two distinct eigenvalues, and so $\max\Lambda>\frac{n}{d}$.
In the case where $n<d$, the analogous weighted average of eigenvalues of $Z^*Z$ equals $1$, and so $\max\Lambda>1$ follows from the assumption that the columns of $Z$ are not orthonormal.
Overall, we have
\[
d'<d,
\qquad
\max\Lambda>\max\bigg\{\frac{n}{d},1\bigg\}.
\]
Next, consider the assignment $f\colon[n]\to\Lambda$ afforded by Proposition~\ref{prop.focp}.
Then the spectral theorem gives
\[
\sum_{\lambda\in\Lambda}
\sum_{j\in f^{-1}(\lambda)}z_jz_j^*
=\sum_{j\in[n]}z_jz_j^*
=ZZ^*
=\sum_{\lambda\in\Lambda}\lambda P_\lambda.
\]
Since distinct eigenspaces are orthogonal, then for each $\lambda'\in\Lambda$, we have
\[
\sum_{j\in f^{-1}(\lambda')}z_jz_j^*
=P_{\lambda'}\sum_{\lambda\in\Lambda}
\sum_{j\in f^{-1}(\lambda)}z_jz_j^*
=P_{\lambda'}\sum_{\lambda\in\Lambda}\lambda P_\lambda
=\lambda' P_{\lambda'},
\]
i.e., $\{z_jz_j^*\}_{j\in f^{-1}(\lambda')}$ is a unit norm tight frame for the eigenspace corresponding to $\lambda'$.
Taking $n':=|f^{-1}(\max\Lambda)|$ gives $\max\Lambda=\frac{n'}{d'}$, and the result follows.
\smartqed
\end{proof}

Next, we will characterize the second-order critical points of the frame potential.
To accomplish this, we first recall how Wirtinger calculus captures Hessian information.
The Wirtinger derivative $\frac{\partial f}{\partial z_i}\colon\mathbb{C}^n\to\mathbb{C}$ is defined by
\[
\frac{\partial f}{\partial z_i}
:=\frac{1}{2}\bigg(\frac{\partial(f\circ g)}{\partial x_i}-\mathrm{i}\frac{\partial(f\circ g)}{\partial y_i}\bigg)\circ g^{-1},
\]
and the \textbf{Wirtinger Hessians} $\frac{\partial^2f}{\partial z^2}, \frac{\partial^2f}{\partial z\partial \overline{z}}, \frac{\partial^2f}{\partial \overline{z}\partial z}, \frac{\partial^2f}{\partial \overline{z}^2}\colon\mathbb{C}^n\to\mathbb{C}^{n\times n}$ are defined by
\[
\bigg(\frac{\partial^2f}{\partial z\partial \overline{z}}(u)\bigg)_{ij}
:=\frac{\partial}{\partial z_i}\bigg(\frac{\partial f}{\partial \overline{z}_j}\bigg)(u),
\]
and similarly for the other Wirtinger Hessians.
The following result is essentially contained in the monograph~\cite{Kreutz:09}, but the proof is straightforward, so we provide a sketch for completeness:

\begin{proposition}
\label{prop.wirtinger socp}
Define $g\colon\mathbb{R}^n\times\mathbb{R}^n\to\mathbb{C}^n$ by $g(x,y):=x+\mathrm{i}y$ and $h\colon\mathbb{C}^n\to\mathbb{C}^n\times\mathbb{C}^n$ by $h(z):=(z,\overline{z})$.
Consider any $f\colon\mathbb{C}^n\to\mathbb{R}$ such that $f\circ g$ is twice continuously differentiable.
For every $u,v\in\mathbb{R}^n\times\mathbb{R}^n$, it holds that
\[
v^\top\nabla^2(f\circ g)(u)v
=(h\circ g)(v)^\top\left[\begin{array}{cc}
\frac{\partial^2f}{\partial z^2}(g(u))&\frac{\partial^2f}{\partial z\partial\overline{z}}(g(u))\\
\frac{\partial^2f}{\partial\overline{z}\partial z}(g(u))&\frac{\partial^2f}{\partial \overline{z}^2}(g(u))
\end{array}\right](h\circ g)(v).
\]
\end{proposition}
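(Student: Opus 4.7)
The plan is to decompose an arbitrary vector $v\in\mathbb{R}^n\times\mathbb{R}^n$ as $v=(p,q)$ with $p,q\in\mathbb{R}^n$, so that $(h\circ g)(v)=(p+\mathrm{i}q,\,p-\mathrm{i}q)$, and to verify the identity by expanding the directional second derivative on the left-hand side in terms of Wirtinger derivatives.

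First, I would invert the defining formulas for the Wirtinger derivatives. The identities $\frac{\partial f}{\partial z_i}=\tfrac{1}{2}(\partial_{x_i}-\mathrm{i}\partial_{y_i})(f\circ g)\circ g^{-1}$ and $\frac{\partial f}{\partial\overline{z}_i}=\tfrac{1}{2}(\partial_{x_i}+\mathrm{i}\partial_{y_i})(f\circ g)\circ g^{-1}$ yield, after a line of algebra,
\[
\frac{\partial(f\circ g)}{\partial x_i}=\bigg(\frac{\partial f}{\partial z_i}+\frac{\partial f}{\partial \overline{z}_i}\bigg)\circ g,
\qquad
\frac{\partial(f\circ g)}{\partial y_i}=\mathrm{i}\bigg(\frac{\partial f}{\partial z_i}-\frac{\partial f}{\partial \overline{z}_i}\bigg)\circ g.
\]
Plugging these into the directional derivative along $v=(p,q)$ and collecting coefficients gives the compact operator identity
\[
v^\top\nabla(f\circ g)=\sum_{i}(p_i+\mathrm{i}q_i)\bigg(\frac{\partial f}{\partial z_i}\circ g\bigg)+\sum_{i}(p_i-\mathrm{i}q_i)\bigg(\frac{\partial f}{\partial \overline{z}_i}\circ g\bigg).
\]

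Next, I would use $v^\top\nabla^2(f\circ g)(u)\,v=(v^\top\nabla)^2(f\circ g)(u)$ and apply the same operator identity a second time to each first Wirtinger derivative appearing above. This produces four sums, one for each choice of sign pattern in $p\pm\mathrm{i}q$, involving the second Wirtinger derivatives $\partial_{\bullet_j}\partial_{\bullet_i}f$. On the other side, writing out the block matrix product on the right-hand side of the proposition produces precisely the same four sums, except that the order of the two Wirtinger differentiations is reversed — for instance, $\partial_{z_i}\partial_{\overline{z}_j}f$ on the right versus $\partial_{\overline{z}_j}\partial_{z_i}f$ on the left.

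The remaining step, and the only one requiring any real input, is to invoke the equality of mixed partials for $f\circ g$: since each Wirtinger second derivative is a fixed $\mathbb{C}$-linear combination of real second partial derivatives of $f\circ g$, and these commute by Clairaut's theorem under the $C^2$ hypothesis, the Wirtinger Hessians satisfy $\partial_{z_i}\partial_{\overline{z}_j}f=\partial_{\overline{z}_j}\partial_{z_i}f$ and the analogous identities for the other index swaps. Matching terms then identifies the two expansions. I do not foresee a substantive obstacle — the entire proof is bookkeeping once the first-order operator identities are in hand.
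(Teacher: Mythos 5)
Your argument is correct and follows essentially the paper's route: both proofs rest on the operator identities $\partial_{x_i}=\partial_{z_i}+\partial_{\overline{z}_i}$ and $\partial_{y_i}=\mathrm{i}(\partial_{z_i}-\partial_{\overline{z}_i})$ applied twice; the paper assembles the four real Hessian blocks $\nabla^2_{xx},\nabla^2_{xy},\nabla^2_{yx},\nabla^2_{yy}$ as $\mathbb{C}$-linear combinations of the Wirtinger Hessians and then rearranges, whereas you iterate the scalar operator $v^\top\nabla$, which compresses the same bookkeeping. One small remark: the appeal to Clairaut's theorem is not actually required, since the statement keeps $\frac{\partial^2 f}{\partial z\partial\overline{z}}$ and $\frac{\partial^2 f}{\partial\overline{z}\partial z}$ as distinct blocks, and a reindexing $i\leftrightarrow j$ in each of your four double sums matches your expansion term-by-term to the right-hand side without invoking symmetry of mixed partials; your use of it is of course still valid under the $C^2$ hypothesis.
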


\begin{proof}[sketch]
By a standard abuse of notation, we write
\[
\frac{\partial}{\partial x_i}=\frac{\partial}{\partial z_i}+\frac{\partial}{\partial \overline{z}_i},
\qquad
\frac{\partial}{\partial y_i}=\mathrm{i}\bigg(\frac{\partial}{\partial z_i}-\frac{\partial}{\partial \overline{z}_i}\bigg).
\]
Then, for example,
\[
\frac{\partial^2}{\partial y_i\partial x_j}
=\mathrm{i}\bigg(\frac{\partial}{\partial z_i}-\frac{\partial}{\partial \overline{z}_i}\bigg)\bigg(\frac{\partial}{\partial z_j}+\frac{\partial}{\partial \overline{z}_j}\bigg),
\]
and expanding implies
\[
\nabla_{yx}^2=\mathrm{i}\bigg(\frac{\partial^2}{\partial z^2}+\frac{\partial^2}{\partial z\partial\overline{z}}-\frac{\partial^2}{\partial\overline{z}\partial z}-\frac{\partial^2}{\partial \overline{z}^2}\bigg).
\]
The same approach delivers similar expressions for $\nabla_{xx}^2$, $\nabla_{xy}^2$, and $\nabla_{yy}^2$.
The desired result can then be verified by expanding the left-hand side in terms of these expressions and rearranging to obtain the right-hand side.
\smartqed
\end{proof}

Proposition~\ref{prop.wirtinger socp} will help us identify the second-order critical points of the frame potential.
To this end, the following lemma summarizes a straightforward calculation:

\begin{lemma}
\label{lem.wirtinger hessians}
For $\operatorname{EP},h_1,\ldots,h_n\colon\mathbb{C}^{d\times n}\to\mathbb{R}$ defined in Lemma~\ref{lem.euclidean gradient}, it holds that
\[
\frac{\partial^2\operatorname{EP}}{\partial Z_{pq}\partial Z_{rs}}(Z)
=2\overline{Z}_{rq}\overline{Z}_{ps},
\qquad
\frac{\partial^2\operatorname{EP}}{\partial \overline{Z}_{pq}\partial \overline{Z}_{rs}}(Z)
=2Z_{rq}Z_{ps},
\]
\[
\frac{\partial^2\operatorname{EP}}{\partial Z_{pq}\partial \overline{Z}_{rs}}(Z)
=2\bigg(\delta_{pr}(Z^*Z)_{qs}+\delta_{qs}(ZZ^*)_{rp}\bigg),
\]
\[
\frac{\partial^2 h_j}{\partial Z_{pq}\partial Z_{rs}}(Z)
=\frac{\partial^2 h_j}{\partial \overline{Z}_{pq}\partial \overline{Z}_{rs}}(Z)
=0,
\qquad
\frac{\partial^2 h_j}{\partial Z_{pq}\partial \overline{Z}_{rs}}(Z)
=\delta_{pr}\delta_{qj}\delta_{sj}.
\]
\end{lemma}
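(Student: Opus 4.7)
The plan is to treat $Z$ and $\overline{Z}$ as formally independent variables, which is the defining virtue of the Wirtinger formalism invoked just before the lemma. Every identity to be proved is purely algebraic in the entries of $Z$ and $\overline{Z}$, so the proof reduces to careful bookkeeping with Kronecker deltas. I would organize the calculation so that it reuses, rather than re-derives, the index-level expressions already computed in the proof of Lemma~\ref{lem.euclidean gradient}.

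For $\operatorname{EP}$, the cleanest starting point is the first-order expression $\partial \operatorname{EP}/\partial \overline{Z}_{rs} = 2(ZZ^*Z)_{rs} = 2\sum_{a,b}Z_{ra}\overline{Z}_{ba}Z_{bs}$, which is half of what Lemma~\ref{lem.euclidean gradient} gives. To obtain the mixed Hessian $\partial^2\operatorname{EP}/\partial Z_{pq}\partial\overline{Z}_{rs}$, I would apply $\partial/\partial Z_{pq}$ and use the product rule: the two surviving terms come from hitting $Z_{ra}$ and $Z_{bs}$ respectively, producing factors $\delta_{pr}\delta_{qa}$ and $\delta_{pb}\delta_{qs}$ that collapse the double sum to $\delta_{pr}(Z^*Z)_{qs}+\delta_{qs}(ZZ^*)_{rp}$. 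The pure $\overline{Z}\overline{Z}$ Hessian is even easier: applying $\partial/\partial\overline{Z}_{pq}$ kills both $Z_{ra}$ and $Z_{bs}$ and hits $\overline{Z}_{ba}$, producing $\delta_{pb}\delta_{qa}$, so the double sum collapses to $Z_{rq}Z_{ps}$. For the pure $ZZ$ Hessian I would avoid redoing the calculation and instead use the fact that $\operatorname{EP}$ is real-valued, so the $ZZ$ and $\overline{Z}\overline{Z}$ Hessians are entrywise complex conjugates of one another; conjugating $2Z_{rq}Z_{ps}$ gives $2\overline{Z}_{rq}\overline{Z}_{ps}$.

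For the constraints $h_j$, the expansion $h_j(Z)=\sum_i Z_{ij}\overline{Z}_{ij}-1$ already in the proof of Lemma~\ref{lem.euclidean gradient} makes everything transparent: each summand is linear in $Z$ and linear in $\overline{Z}$, so both pure Hessians vanish identically. For the mixed Hessian, differentiating $\partial h_j/\partial\overline{Z}_{rs}=Z_{rj}\delta_{sj}$ with respect to $Z_{pq}$ produces $\delta_{pr}\delta_{qj}\delta_{sj}$ directly.

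There is no real obstacle; the one thing to be careful about is the index conventions. The statement of the lemma uses $p,q$ for the outer derivative index in a Hessian whose component definition puts the outer derivative first, and it uses $r,s$ for the inner, so one must resist the temptation to swap roles when comparing against $(ZZ^*Z)_{rs}$. Once that bookkeeping is fixed, the product rule with the Kronecker deltas does all the work, and each claimed identity falls out after a single relabeling.
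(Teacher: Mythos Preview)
Your proposal is correct and is precisely the direct verification the paper has in mind; the paper itself omits any proof of this lemma, introducing it only as ``a straightforward calculation.'' Your use of the conjugation identity $\partial^2 f/\partial Z_{pq}\partial Z_{rs}=\overline{\partial^2 f/\partial \overline{Z}_{pq}\partial \overline{Z}_{rs}}$ for real-valued $f$ is a nice economy, and your index bookkeeping checks out against the stated formulas.
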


We may now prove the main result of this section:

\begin{proof}[of Theorem~\ref{thm.socp}]
One containment is trivial: every global minimizer is necessarily a local minimizer, which is necessarily a second-order critical point.
For the other direction, we show that every point $Z\in\operatorname{S}(d,n)$ that is not a global minimizer is necessarily not a second-order critical point.
Since every second-order critical point is necessarily a first-order critical point, the implication trivially holds if $Z$ is not a first-order critical point.
Suppose $Z=[z_1\cdots z_n]$ is a first-order critical point that is not a global minimizer.
The corresponding Lagrange multipliers $\{\lambda_j\}_{j\in[n]}$ are the unique solution to
\[
ZZ^*z_j
=-\frac{1}{2}\lambda_jz_j
\]
for $j\in[n]$.
Consider the map $L\colon\mathbb{C}^{d\times n}\to\mathbb{R}$ defined by
\[
L(Z)
:=\operatorname{EP}(Z)+\sum_{j\in[n]}\lambda_jh_j(Z).
\]
To show that $Z$ is not a second-order critical point, Proposition~\ref{prop.wirtinger socp} gives that it suffices to find $Y\in\mathbb{C}^{d\times n}$ such that
\[
\operatorname{Re}\operatorname{tr}(\nabla h_j(Z)^*Y)
=0
\]
for every $j\in[n]$ and
\begin{align*}
&\sum_{p\in[d]}\sum_{q\in[n]}\sum_{r\in[d]}\sum_{s\in[n]}
\bigg(
\frac{\partial^2L(Z)}{\partial Z_{pq}\partial Z_{rs}}Y_{pq}Y_{rs}+
\frac{\partial^2L(Z)}{\partial Z_{pq}\partial \overline{Z}_{rs}}Y_{pq}\overline{Y}_{rs}\\
&\qquad\qquad\qquad\qquad\qquad+
\frac{\partial^2L(Z)}{\partial \overline{Z}_{pq}\partial Z_{rs}}\overline{Y}_{pq}Y_{rs}+
\frac{\partial^2L(Z)}{\partial \overline{Z}_{pq}\partial \overline{Z}_{rs}}\overline{Y}_{pq}\overline{Y}_{rs}\bigg)
<0.
\end{align*}
To enunciate our choice for $Y$, let $J\subseteq[n]$ denote the index set of the $n'$ columns of $Z$ implicated by Corollary~\ref{cor.focp feature}.
Then Corollary~\ref{cor.focp feature} gives that $\{z_j\}_{j\in J}$ spans a $d'$-dimensional eigenspace $E$ of $ZZ^*$ with $d'<\min\{d,n'\}$.
Since $d'<d$, there is another eigenspace $E'$ of $ZZ^*$, and we select $u\in E'$ with $\|u\|^2=1$.
Since $d'<n'$, then $\{z_j\}_{j\in J}$ is linearly dependent, and we select $v\in\mathbb{C}^n$ with support contained in $J$ such that $Zv=0$ and $\|v\|^2=1$.
We take $Y:=uv^*$, and it remains to verify that $Y$ satisfies the above conditions.
First, Lemma~\ref{lem.euclidean gradient} gives
\[
\operatorname{Re}\operatorname{tr}(\nabla h_j(Z)^*Y)
=\operatorname{Re}\operatorname{tr}(2e_j z_j^*uv^*)
=2\operatorname{Re}(v^*e_j z_j^*u)
=0,
\]
where the last step follows from the facts that $v^*e_j=0$ whenever $j\not\in J$ and $z_j^*u=0$ whenever $j\in J$.
Next, Lemma~\ref{lem.wirtinger hessians} gives
\begin{align*}
\sum_{p\in[d]}\sum_{q\in[n]}\sum_{r\in[d]}\sum_{s\in[n]}
\frac{\partial^2\operatorname{EP}(Z)}{\partial Z_{pq}\partial Z_{rs}}Y_{pq}Y_{rs}
&=\sum_{p\in[d]}\sum_{q\in[n]}\sum_{r\in[d]}\sum_{s\in[n]}
2\overline{Z}_{rq}\overline{Z}_{ps}u_p\overline{v}_qu_r\overline{v}_s\\
&=2(v^*Z^*u)^2
=0,
\end{align*}
where the last step follows from the fact that $v$ and $Z^*u$ have disjoint support.
We similarly have
\[
\sum_{p\in[d]}\sum_{q\in[n]}\sum_{r\in[d]}\sum_{s\in[n]}
\frac{\partial^2\operatorname{EP}(Z)}{\partial \overline{Z}_{pq}\partial \overline{Z}_{rs}}\overline{Y}_{pq}\overline{Y}_{rs}
=0.
\]
Considering Lemma~\ref{lem.wirtinger hessians}, it remains to show that
\begin{equation}
\label{eq.final hessian inequality}
\operatorname{Re}\bigg(\sum_{p\in[d]}\sum_{q\in[n]}\sum_{r\in[d]}\sum_{s\in[n]}
\frac{\partial^2L(Z)}{\partial Z_{pq}\partial \overline{Z}_{rs}}Y_{pq}\overline{Y}_{rs}\bigg)
<0.
\end{equation}
To this end, let $\alpha$ and $\beta$ denote the eigenvalues corresponding to eigenspaces $E$ and $E'$, respectively.
Notably, Corollary~\ref{cor.focp feature} gives $\alpha>\beta$, and this will be the source of our desired inequality.
Lemma~\ref{lem.wirtinger hessians} gives
\begin{align}
\nonumber
&\sum_{p\in[d]}\sum_{q\in[n]}\sum_{r\in[d]}\sum_{s\in[n]}
\frac{\partial^2\operatorname{EP}(Z)}{\partial Z_{pq}\partial \overline{Z}_{rs}}Y_{pq}\overline{Y}_{rs}\\
\nonumber
&=\sum_{p\in[d]}\sum_{q\in[n]}\sum_{r\in[d]}\sum_{s\in[n]}
2\bigg(\delta_{pr}(Z^*Z)_{qs}+\delta_{qs}(ZZ^*)_{rp}\bigg) u_p\overline{v}_q \overline{u}_r v_s\\
\nonumber
&=2\sum_{q\in[n]}\sum_{r\in[d]}\sum_{s\in[n]}(Z^*Z)_{qs}u_r\overline{v}_q\overline{u}_rv_s
+2\sum_{p\in[d]}\sum_{r\in[d]}\sum_{s\in[n]}(ZZ^*)_{rp}u_p\overline{v}_s\overline{u}_rv_s\\
\label{eq.ep hessian}
&=2\|Zv\|^2\|u\|^2+2\|Z^*u\|^2\|v\|^2
=2\beta,
\end{align}
where the last step uses the facts that $Zv=0$, $\|v\|^2=1$, and $\|Z^*u\|^2=u^*ZZ^*u=\beta$.
Next, Lemma~\ref{lem.wirtinger hessians} gives
\begin{align}
\sum_{p\in[d]}\sum_{q\in[n]}\sum_{r\in[d]}\sum_{s\in[n]}
\frac{\partial^2 h_j(Z)}{\partial Z_{pq}\partial \overline{Z}_{rs}}Y_{pq}\overline{Y}_{rs}
\nonumber
&=\sum_{p\in[d]}\sum_{q\in[n]}\sum_{r\in[d]}\sum_{s\in[n]}
\delta_{pr}\delta_{qj}\delta_{sj} u_p\overline{v}_q \overline{u}_rv_s\\
\label{eq.hj hessian}
&=\|u\|^2|v_j|^2
=|v_j|^2.
\end{align}
We combine \eqref{eq.ep hessian} and \eqref{eq.hj hessian} to get
\[
\sum_{p\in[d]}\sum_{q\in[n]}\sum_{r\in[d]}\sum_{s\in[n]}
\frac{\partial^2L(Z)}{\partial Z_{pq}\partial \overline{Z}_{rs}}Y_{pq}\overline{Y}_{rs}
=2\beta+\sum_{j\in[n]}\lambda_j|v_j|^2
=2(\beta-\alpha)
<0,
\]
where the last equality follows from the facts that $\lambda_j=-2\alpha$ whenever $j\in J$, $v_j=0$ whenever $j\not\in J$, and $\|v\|^2=1$.
This implies the desired inequality \eqref{eq.final hessian inequality}.
\smartqed
\end{proof}

\section{Convexity and topology}
\label{sec.topology}

Given a self-adjoint matrix $A\in\mathbb{C}^{d\times d}$, let $\lambda(A)\in\mathbb{R}^d$ denote the vector of eigenvalues of $A$ sorted in monotonically decreasing order.
This section proves the following strengthening of the Benedetto--Fickus theorem:

\begin{theorem}
\label{thm.generalized frame potential}
Given a convex function $f\colon\mathbb{R}^d\to\mathbb{R}$, consider $\operatorname{FP}_f\colon\operatorname{S}(d,n)\to\mathbb{R}$ defined by $\operatorname{FP}_f(Z):=f(\lambda(ZZ^*))$.
Then $\operatorname{FP}_f$ has no spurious local minimizers.
\end{theorem}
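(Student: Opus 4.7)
The plan is to realize $\operatorname{FP}_f$ as a composition $f\circ\psi$, where $\psi\colon\operatorname{S}(d,n)\to\mathbb{R}^d$ is an open surjection onto a convex set. Granting this, the theorem follows from a general observation that I would state and prove first as a lemma: if $\psi\colon X\to Y$ is a continuous open surjection onto a convex set $Y$ and $f\colon Y\to\mathbb{R}$ is convex, then $f\circ\psi$ has no spurious local minimizers. Indeed, a local minimizer $x$ of $f\circ\psi$ has an open neighborhood $U\subseteq X$ on which $f\circ\psi$ is minimized; openness of $\psi$ makes $\psi(U)$ an open neighborhood of $\psi(x)$ in $Y$ on which $f$ is minimized; and convexity of $f$ on the convex set $Y$ promotes this local minimum to a global minimum of $f$, which lifts back to a global minimum of $f\circ\psi$.

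The natural choice is $\psi(Z):=\lambda(ZZ^*)$. A Schur--Horn-style majorization argument identifies its image with a convex polytope in $\mathbb{R}^d$ (cut out by the trace condition $\sum_i\lambda_i=n$, decreasing order, nonnegativity, and the partial-sum inequalities $\sum_{i=1}^k\lambda_i\geq k$ coming from the unit diagonal of $Z^*Z$). To establish the key openness statement, I would invoke the eigensteps framework of~\cite{Cahill:13}. For $Z=[z_1\cdots z_n]\in\operatorname{S}(d,n)$ with $Z_k:=[z_1\cdots z_k]$, define the eigensteps map $\phi(Z):=(\lambda(Z_kZ_k^*))_{k=0}^{n}$. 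The main results of~\cite{Cahill:13} exhibit the image $\Pi:=\phi(\operatorname{S}(d,n))$ as a convex polytope carved out by Cauchy interlacing between consecutive columns and the unit-trace increments $\operatorname{tr}(Z_kZ_k^*)=k$, and provide an explicit inductive column-by-column construction realizing every $p\in\Pi$. Since $\psi=\pi\circ\phi$, where $\pi\colon\Pi\to\mathbb{R}^d$ is the linear projection onto the final column $\lambda^{(n)}$, and $\pi$ is open onto its convex image, openness of $\psi$ reduces to openness of $\phi$.

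The main obstacle is proving openness of $\phi$. The construction in~\cite{Cahill:13} recovers each column $z_k$ of a realizing $Z$ from $\lambda^{(k-1)}$ and $\lambda^{(k)}$ through an eigendecomposition recipe, up to a residual unitary gauge on each eigenspace. I would continuously track this recipe as $p$ varies in a neighborhood inside $\Pi$, carefully choosing eigenvector bases so they vary continuously (handling coalescing eigenvalues by using the compactness of the gauge group to splice local choices together) and thereby produce a continuous local section of $\phi$ through every point of $\Pi$. The existence of such local sections is equivalent to $\phi$ being open, and composing with the linear open map $\pi$ yields the desired openness of $\psi$, completing the argument via the lemma from the first paragraph.
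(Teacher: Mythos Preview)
Your proposal is correct and follows essentially the same approach as the paper: factor $\operatorname{FP}_f$ as a convex function of the eigensteps map, prove the general lemma that such compositions have no spurious local minimizers, and establish openness of the eigensteps map by building continuous sections out of the construction in~\cite{Cahill:13}. For the boundary difficulty you flag (coalescing eigenvalues), the paper defines the section explicitly on $\operatorname{int}(\operatorname{E}(d,n))\times\operatorname{U}(d,n)$ via the formulas of~\cite{Cahill:13}, shows it is uniformly continuous (the underlying rational map is bounded, hence Lipschitz), extends to the closure by completeness of $\operatorname{S}(d,n)$, and then uses density of $\Sigma^{-1}(\operatorname{int}(\operatorname{E}(d,n)))$ together with compactness of the gauge group to produce a section through any given $Z$---the compactness idea you gesture at, made precise.
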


Notably, if $f(\cdot)=\|\cdot\|^2$, then $\operatorname{FP}_f$ is the frame potential.
We will prove Theorem~\ref{thm.generalized frame potential} using ideas from both convexity and topology.
Recall that given topological spaces $(X,\tau_X)$ and $(Y,\tau_Y)$, we say $f\colon X\to Y$ is \textbf{open at $x\in X$} if for every $U\in \tau_X$ such that $x\in U$, there exists $V\in\tau_Y$ such that $f(x)\in V\subseteq f(U)$.
Furthermore, $f$ is \textbf{open} if for every $x\in X$, $f$ is open at $x$.
Throughout this section, $\operatorname{int}(U)$ and $\operatorname{cl}(U)$ denote the topological interior and closure of $U$, respectively.
The following observation appears to originate with the recent preprint~\cite{NouiehedR:18}:

\begin{lemma}
\label{lem.convex of open}
Given a topological space $X$, a convex subset $Y$ of a real topological vector space, an open map $g\colon X\to Y$ in the subspace topology for $Y$, and a convex function $f\colon Y\to\mathbb{R}$, it holds that $f\circ g$ has no spurious local minimizers.
\end{lemma}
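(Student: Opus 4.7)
The plan is to run a standard contradiction argument that turns a purported descent direction in $Y$ into a descent direction near $x_0$ via the open map hypothesis. Fix a local minimizer $x_0 \in X$ of $f \circ g$ and set $y_0 := g(x_0)$. By local minimality, there is an open neighborhood $U$ of $x_0$ in $X$ with $(f\circ g)(x) \geq (f\circ g)(x_0)$ for all $x \in U$. Since $g$ is open at $x_0$, there is an open subset $V$ of $Y$ (in the subspace topology) with $y_0 \in V \subseteq g(U)$. Each $y \in V$ is $g(x)$ for some $x \in U$, so $f(y) \geq f(y_0)$ on $V$.

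Next, I would assume for contradiction the existence of some $y_1 \in Y$ with $f(y_1) < f(y_0)$ and produce a point of $V$ witnessing a strictly smaller value of $f$. Define the segment $y_t := (1-t) y_0 + t y_1$ for $t \in [0,1]$, which lies in $Y$ by convexity of $Y$. By convexity of $f$, for every $t \in (0,1]$,
\[
f(y_t) \leq (1-t) f(y_0) + t f(y_1) < f(y_0).
\]
Because $Y$ sits in a real topological vector space, the map $t \mapsto y_t$ is continuous from $[0,1]$ into $Y$ with its subspace topology (this is just continuity of scalar multiplication and vector addition restricted to a convex subset). Since $V$ is an open neighborhood of $y_0 = y_{\,0}$ in $Y$, there exists $t^* \in (0,1]$ with $y_{t^*} \in V$, and then $f(y_{t^*}) \geq f(y_0)$, contradicting the strict inequality above.

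Hence no such $y_1$ exists, so $f(y_0) \leq f(y)$ for every $y \in Y$, which means $(f\circ g)(x_0) \leq (f\circ g)(x)$ for every $x \in X$. Thus $x_0$ is a global minimizer of $f\circ g$, and there are no spurious local minimizers.

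The only point requiring any care is the topological one: one must check that the openness of $g$ gives a genuine $Y$-neighborhood inside $g(U)$ (which is exactly what ``open in the subspace topology for $Y$'' delivers in the hypothesis), and that the affine path $t \mapsto y_t$ is continuous into $Y$ with the subspace topology. Neither is substantive — they follow directly from the definitions — so I expect no real obstacle; the argument is essentially the textbook proof that a local minimum of a convex function is global, pulled back through $g$ using openness.
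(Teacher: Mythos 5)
Your proof is correct and follows essentially the same route as the paper's: push the local-minimality neighborhood forward through the open map $g$ to get a neighborhood of $g(x_0)$ on which $f$ is minimized, then invoke convexity of $f$ to upgrade this to a global minimum. The only difference is that you spell out the standard ``local min of a convex function on a convex set is global'' argument via the line segment $y_t$, whereas the paper simply cites this fact; both are fine.
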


\begin{proof}
Suppose $x_0\in X$ is a local minimizer of $f\circ g$, i.e., there exists $U\in\tau_X$ with $x_0\in U$ such that $(f\circ g)(x)\geq(f\circ g)(x_0)$ for every $x\in U$.
Since $g$ is open, it follows that $V:=g(U)\in\tau_Y$ contains $g(x_0)$ and has the property that $f(y)\geq f(g(x_0))$ for every $y\in V$.
That is, $g(x_0)$ is a local minimizer of $f$.
Since $f$ is convex, $g(x_0)$ is necessarily a global minimizer of $f$, meaning $f(y)\geq f(g(x_0))$ for every $y\in Y$.
Then $f(g(x))\geq f(g(x_0))$ for every $x\in X$, i.e., $x_0$ is a global minimizer of $f\circ g$.
\end{proof}

As such, to prove Theorem~\ref{thm.generalized frame potential}, it suffices to factor $\operatorname{FP}_f$ as the composition of an open map with a convex function.
To proceed, we need a definition.
Given $x,y\in\mathbb{R}^d$, we say $x$ \textbf{interlaces} $y$ and write $x\sqsupseteq y$ if
\[
x_1\geq y_1\geq x_2 \geq y_2\geq\cdots\geq x_d\geq y_d.
\]
Let $\operatorname{E}(d,n)$ denote the polytope of matrices $[\lambda_1\cdots\lambda_n]\in\mathbb{R}^{d\times n}$ such that
\[
\lambda_n\sqsupseteq\cdots\sqsupseteq\lambda_1\sqsupseteq0
\qquad
\text{and}
\qquad
\sum_{i\in[d]}(\lambda_j)_i=j
\quad
\forall j\in[n].
\]
For example, one may verify that
\[
\frac{1}{4}\left[
\begin{array}{ccccc}
4&6&7&8&9\\
0&2&4&6&7\\
0&0&1&2&4
\end{array}
\right]
\in\operatorname{E}(3,5).
\]
Throughout this section, we use $\Lambda$ to denote a member of $\operatorname{E}(d,n)$.

Define $\Sigma\colon\operatorname{S}(d,n)\to\operatorname{E}(d,n)$ so that the $j$th column of $\Sigma([z_1\cdots z_n])$ is the vector of eigenvalues of $\sum_{i\in[j]}z_iz_i^*$ sorted in monotonically decreasing order.
The fact that $\Sigma(Z)\in\operatorname{E}(d,n)$ for every $Z\in\operatorname{S}(d,n)$ essentially follows from Cauchy's interlacing theorem and is established in the proof of the ($\Rightarrow$) direction of Theorem~2 in~\cite{Cahill:13}. 
In words, $\Sigma(Z)$ is known as the \textbf{eigensteps} of $Z$.
(We use $\Sigma$ since it looks like an E for \textit{eigen} and sounds like an S for \textit{steps}.)
Observe the relationship between eigensteps and Gelfand--Tsetlin polytopes~\cite{DeLoeraM:04}.
Eigensteps were introduced in~\cite{Cahill:13} to explicitly construct the set of unit norm tight frames, and were later used to establish the path-connectivity of this set~\cite{CahillMS:17}.

Considering $\lambda(ZZ^*)=\Sigma(Z)e_d$, then $\operatorname{FP}_f$ is the composition of $Z\mapsto\Sigma(Z)$ with the convex function $\Lambda\mapsto f(\Lambda e_d)$.
Thus, we would like to show that the eigensteps map $\Sigma$ is open.
The following lemma summarizes our approach to this end:

\begin{lemma}
\label{lem.suff cond for open}
Given topological spaces $X$ and $Y$, consider a point $x\in X$ and a map $f\colon X\to Y$ that is continuous at $x$.
Suppose there exists $W\in \tau_Y$ such that $f(x)\in W$ and a continuous map $g\colon W\to X$ such that
\[
(g\circ f)(x)=x
\qquad
\text{and}
\qquad
f\circ g=\operatorname{id}_W.
\]
Then $f$ is open at $x$.
\end{lemma}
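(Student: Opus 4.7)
The plan is to construct the required open neighborhood of $f(x)$ directly from the local right-inverse $g$. Given any $U \in \tau_X$ with $x \in U$, I set $V := g^{-1}(U)$, and I claim this is the desired open neighborhood of $f(x)$ contained in $f(U)$.

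First I would verify that $V$ is open in $Y$. Since $g \colon W \to X$ is continuous and $U$ is open in $X$, the preimage $g^{-1}(U)$ is open in $W$ under the subspace topology. But $W$ itself lies in $\tau_Y$ by hypothesis, so any subset of $W$ that is open in the subspace topology is also open in $Y$; therefore $V \in \tau_Y$.

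Next I would verify that $f(x) \in V$. By the first identity $(g \circ f)(x) = x$, we have $g(f(x)) = x \in U$, so $f(x) \in g^{-1}(U) = V$. (This step is also what guarantees that $f(x)$ actually lies in $W$, the domain of $g$.) Finally, to check the containment $V \subseteq f(U)$, pick any $v \in V$. Then $v \in W$, so the second identity $f \circ g = \operatorname{id}_W$ gives $v = f(g(v))$, while the defining property of $V$ gives $g(v) \in U$; combining these, $v = f(g(v)) \in f(U)$.

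There is no genuine obstacle here: the lemma is essentially a formal manipulation, and the real content is the observation that a local continuous section (right-inverse) of $f$ defined on an open neighborhood of $f(x)$ automatically forces $f$ to be open at $x$. It is worth noting that the stated continuity of $f$ at $x$ is not actually invoked in the argument above; the map $g$ and its two identities carry the entire proof. The work in applying this lemma to the eigensteps map $\Sigma$ will lie in producing such a continuous local section, not in the abstract principle itself.
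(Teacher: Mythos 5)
Your proof is correct and follows essentially the same route as the paper's: the paper takes $V := g^{-1}(U\cap f^{-1}(W))$, which in fact coincides with your $g^{-1}(U)$, since $g^{-1}(f^{-1}(W))=(f\circ g)^{-1}(W)=\operatorname{id}_W^{-1}(W)=W$ and $g^{-1}(U)\subseteq W$ automatically. Your remark that continuity of $f$ at $x$ is never actually invoked is also accurate; the paper's proof lists it among the reasons that $V\in\tau_Y$, but as your argument makes plain, openness of $V$ already follows from continuity of $g$, openness of $U$, and openness of $W$ in $Y$, so that hypothesis is superfluous for this lemma.
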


\begin{proof}
Given any $U\in\tau_X$ such that $x\in U$, take $V:=g^{-1}(U\cap f^{-1}(W))$.
Then $V\in\tau_Y$ since $W$ is open, $f(x)\in W$, $f$ is continuous at $x$, $U$ is open, and $g$ is continuous.
Next, since $x\in U$ and $f(x)\in W$, we have $x\in U\cap f^{-1}(W)$, and so $g(f(x))=(g\circ f)(x)=x$ gives $f(x)\in g^{-1}(U\cap f^{-1}(W))=V$.
It remains to verify that $V\subseteq f(U)$.
Select any $v\in V=g^{-1}(U\cap f^{-1}(W))\subseteq W$.
Then $v=(f\circ g)(v)=f(g(v))\in f(U\cap f^{-1}(W))\subseteq f(U)$.
\smartqed
\end{proof}

Due to the nature of the eigensteps map, our construction of $g$ uses a bit of matrix analysis.
We start with a couple of lemmas:

\begin{lemma}
\label{lem.multiplicity decrease}
Given a self-adjoint matrix $A\in\mathbb{C}^{d\times d}$ and eigenvalue $\lambda$ of multiplicity $m$, let $P_\lambda\in\mathbb{C}^{d\times d}$ denote orthogonal projection onto the corresponding eigenspace.
For $z\in\mathbb{C}^d$ with $P_\lambda z\neq0$, the multiplicity of $\lambda$ as an eigenvalue of $A+zz^*$ is $m-1$.
\end{lemma}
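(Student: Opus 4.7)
The plan is to identify the $\lambda$-eigenspace of $A+zz^*$ exactly as $E_\lambda \cap (P_\lambda z)^\perp$, from which the dimension $m-1$ is immediate: $P_\lambda z$ is a nonzero vector inside the $m$-dimensional space $E_\lambda$, so intersecting $E_\lambda$ with the hyperplane orthogonal to $P_\lambda z$ drops the dimension by exactly one. The guiding observation is that $(A+zz^*)v = \lambda v$ is equivalent to
\[
(A-\lambda I)v = -(z^*v)\,z,
\]
so I would split into cases based on whether $z^*v$ vanishes.

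For the inclusion $E_\lambda \cap (P_\lambda z)^\perp \subseteq \ker(A+zz^*-\lambda I)$, I would use the decomposition $z = P_\lambda z + (I-P_\lambda)z$. If $v \in E_\lambda$, then $v \perp (I-P_\lambda)z$ automatically, so $z^*v = (P_\lambda z)^*v$, and additionally requiring $v \perp P_\lambda z$ gives $z^*v=0$, hence $(A+zz^*)v = Av = \lambda v$. For the reverse inclusion, given any $\lambda$-eigenvector $v$ of $A+zz^*$, the case $z^*v=0$ forces $v \in E_\lambda$ and $\langle z,v\rangle = \langle P_\lambda z,v\rangle = 0$, placing $v$ in $E_\lambda\cap(P_\lambda z)^\perp$; the case $z^*v\neq 0$ would force $z$ to be a nonzero scalar multiple of $(A-\lambda I)v$, so $z\in\operatorname{range}(A-\lambda I)$.

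The one delicate step is precisely this last case: I would rule it out by invoking self-adjointness of $A$ to get $\operatorname{range}(A-\lambda I) = (\ker(A-\lambda I))^\perp = E_\lambda^\perp$, which contradicts $P_\lambda z\neq 0$. This is the only point where the hypothesis is actually used; everything else is straightforward linear algebra, and combining the two inclusions yields $\ker(A+zz^*-\lambda I) = E_\lambda\cap(P_\lambda z)^\perp$, of dimension $m-1$.
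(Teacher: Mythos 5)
Your proof is correct but takes a genuinely different route from the paper. You directly identify the $\lambda$-eigenspace of $A+zz^*$ as $E_\lambda \cap (P_\lambda z)^\perp$ and read off its dimension $m-1$; the paper instead applies the matrix determinant lemma to write the characteristic polynomial of $A+zz^*$ in the form $(t-\lambda)^{m-1}\bigl((t-\lambda)q(t)-\|P_\lambda z\|^2 r(t)\bigr)$ with $q(\lambda)\neq 0\neq r(\lambda)$, and reads off that $\lambda$ is a root of order exactly $m-1$. Your argument is more elementary (no determinant identity) and more informative (it pins down the eigenspace itself, not just its dimension); both approaches need self-adjointness, which you use to get $\operatorname{range}(A-\lambda I)=E_\lambda^\perp$ and which the paper uses to diagonalize $A$ in an orthonormal eigenbasis. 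One small point worth making explicit in your write-up: since $A+zz^*$ is again self-adjoint, the geometric multiplicity you compute coincides with the algebraic multiplicity, so the kernel-dimension count really does give the multiplicity claimed by the lemma.
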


\begin{proof}
Let $\{v_i\}_{i\in[n]}$ denote any orthonormal basis of eigenvectors of $A$ with corresponding eigenvalues $\{\lambda_i\}_{i\in[n]}$.
We apply the matrix determinant lemma to express the characteristic polynomial of $A+zz^*$:
\begin{align*}
p_{A+zz^*}(t)
=\operatorname{det}((tI-A)-zz^*)
&=\operatorname{det}(tI-A)\cdot (1-z^*(tI-A)^{-1}z)\\
&=\bigg(\prod_{i\in [n]}(t-\lambda_i)\bigg)\bigg(1-\sum_{j\in[n]}\frac{|\langle v_j,z\rangle|^2}{t-\lambda_j}\bigg)\\
&=\prod_{i\in[n]}(t-\lambda_i)-\sum_{j\in[n]}|\langle v_j,z\rangle|^2\prod_{\substack{i\in[n]\\i\neq j}}(t-\lambda_j).
\end{align*}
We rewrite this expression in terms of the multiplicity $m(\lambda')$ of each eigenvalue $\lambda'$ of $A$ to find polynomials $q(t)$ and $r(t)$ with $q(\lambda)\neq0\neq r(\lambda)$ such that
\begin{align*}
p_{A+zz^*}(t)
&=\prod_{\lambda'}(t-\lambda')^{m(\lambda')}-\sum_{\lambda'}\|P_{\lambda'}z\|^2(t-\lambda')^{m(\lambda')-1}\prod_{\lambda''\neq\lambda'}(t-\lambda'')^{m(\lambda'')}\\
&=(t-\lambda)^mq(t)-\|P_\lambda z\|^2(t-\lambda)^{m-1}r(t).
\end{align*}
Since $\|P_\lambda z\|^2\neq0$ by assumption, the result follows.
\smartqed
\end{proof}

\begin{lemma}
\label{lem.open and dense in S}
The preimage $\Sigma^{-1}(\operatorname{int}(\operatorname{E}(d,n)))$ is dense in $\operatorname{S}(d,n)$.
\end{lemma}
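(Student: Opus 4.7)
The plan is: given $Z = [z_1 \cdots z_n] \in \operatorname{S}(d,n)$ and $\epsilon > 0$, I will construct $Z' = [z_1' \cdots z_n'] \in \operatorname{S}(d,n)$ with $\|Z - Z'\|_F < \epsilon$ and $\Sigma(Z') \in \operatorname{int}(\operatorname{E}(d,n))$ by inductively perturbing columns. Writing $A_j' := \sum_{i \leq j} z_i' z_i'^*$, I will pick $z_j'$ at each step so that $\|z_j'\| = 1$, $\|z_j - z_j'\| < \epsilon/\sqrt{n}$, and $z_j'$ has nonzero projection onto every eigenspace of $A_{j-1}'$. The last condition excludes a finite union (indexed by the distinct eigenvalues of $A_{j-1}'$) of proper subspaces of $\mathbb{C}^d$, so the admissible perturbations form a dense open subset of the unit sphere and a valid $z_j'$ exists arbitrarily close to $z_j$. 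The base case $j = 1$ is trivial since $A_0' = 0$ imposes no constraint.

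To verify that the induction is self-consistent, I will maintain as a hypothesis that $A_{j-1}'$ has simple positive eigenvalues together with a zero eigenvalue of multiplicity $\max(d - j + 1, 0)$. Lemma~\ref{lem.multiplicity decrease} combined with Cauchy interlacing then promotes the generic condition on $z_j'$ to strict interlacing in every non-forced position: each positive eigenvalue of $A_{j-1}'$ is replaced by a strictly larger positive eigenvalue of $A_j'$, and when $j \leq d$, a new positive eigenvalue appears strictly between the smallest positive eigenvalue of $A_{j-1}'$ and $0$. Hence $A_j'$ again has simple positive eigenvalues and a zero of multiplicity $\max(d - j, 0)$, closing the induction.

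Finally, to conclude $\Sigma(Z') \in \operatorname{int}(\operatorname{E}(d,n))$, I need to identify the affine hull of $\operatorname{E}(d,n)$. It is cut out by the column-sum equalities $\sum_i (\lambda_j)_i = j$ together with the forced zeros $(\lambda_j)_{j+1} = \cdots = (\lambda_j)_d = 0$ for $j < d$, which follow from $\lambda_j \sqsupseteq \lambda_{j-1}$ by an easy induction on $j$. All remaining interlacing inequalities are non-degenerate, and the construction above makes each of them strict; this places $\Sigma(Z')$ in the relative interior. The main obstacle I anticipate is the careful bookkeeping required to identify this affine hull and verify that \emph{strict interlacing wherever possible} is equivalent to lying in $\operatorname{int}(\operatorname{E}(d,n))$, rather than technical difficulty in the perturbation itself.
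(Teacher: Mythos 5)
Your argument is essentially the paper's: perturb the columns one at a time, choosing each perturbed column off the orthogonal complements of the eigenspaces of the running partial sum so that Lemma~\ref{lem.multiplicity decrease} together with Cauchy interlacing forces strict interlacing at every non-forced position. The paper's proof is terser, leaving the induction hypothesis and the identification of the affine hull of $\operatorname{E}(d,n)$ implicit, so your version simply spells out bookkeeping that the paper compresses into the single citation of Lemma~\ref{lem.multiplicity decrease}.
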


\begin{proof}
Given arbitrary $Z=[z_1\cdots z_n]\in\operatorname{S}(d,n)$ and $\epsilon>0$, we identify a matrix $\tilde{Z}=[\tilde{z}_1\cdots\tilde{z}_n]\in\Sigma^{-1}(\operatorname{int}(\operatorname{E}(d,n)))$ such that $\|\tilde{Z}-Z\|_F<\epsilon$.
Put $\tilde{z}_1:=z_1$, and then for each $j\in[n-1]$, select any unit vector $\tilde{z}_{j+1}$ in the open $2$-ball of radius $\epsilon/n$ about $z_{j+1}$ that avoids the orthogonal complements of the eigenspaces of $\sum_{i\in[j]}\tilde{z}_i\tilde{z}_i^*$.
Then $\Sigma(\tilde{Z})\in\operatorname{int}(\operatorname{E}(d,n))$ by Lemma~\ref{lem.multiplicity decrease} and $\|\tilde{Z}-Z\|_F<\epsilon$ by the triangle inequality.
\smartqed
\end{proof}

Let $\mathbb{T}^k\times\operatorname{U}(d-k)$ denote the group of matrices in $\operatorname{U}(d)$ of the form
\[
\left[\begin{array}{cc}
D&0\\
0&U
\end{array}\right]
\]
with diagonal $D\in\operatorname{U}(k)$, and consider the following group of $n$-tuples of matrices:
\[
\operatorname{U}(d,n)
:=\operatorname{U}(d)
\times (\mathbb{T}^1\times\operatorname{U}(d-1))
\times \cdots
\times (\mathbb{T}^{d-1}\times\operatorname{U}(1))
\times (\mathbb{T}^d)^{n-d}.
\]
(We use $\operatorname{U}$ since each component of the tuple is \textit{unitary}.)
The following lemma describes how this set allows us to map back from $\operatorname{E}(d,n)$ to $\operatorname{S}(d,n)$.
This portion of our proof appears to break in the real case since the orthogonal group $\operatorname{O}(1)$ is discrete; it would be interesting if the real case could be treated differently.

\begin{lemma}
\label{lem.partial inverse}
There exists a continuous map $\Phi\colon\operatorname{E}(d,n)\times\operatorname{U}(d,n)\to\operatorname{S}(d,n)$ such that
\begin{itemize}
\item[(a)]
for every $Z\in\operatorname{S}(d,n)$, there exists $U\in\operatorname{U}(d,n)$ such that $\Phi(\Sigma(Z),U)=Z$, and
\item[(b)]
for every $\Lambda\in\operatorname{E}(d,n)$ and $U\in\operatorname{U}(d,n)$, it holds that $\Sigma(\Phi(\Lambda,U))=\Lambda$.
\end{itemize}
\end{lemma}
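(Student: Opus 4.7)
The plan is a recursive, column-by-column construction. Given $(\Lambda, U) \in \operatorname{E}(d,n) \times \operatorname{U}(d,n)$, parse $U$ as $(U_1, \ldots, U_n)$ according to the factorization of $\operatorname{U}(d,n)$, and produce columns $z_1, \ldots, z_n$ together with auxiliary diagonalizing unitaries $V_0, V_1, \ldots, V_n$ satisfying $A_j := \sum_{i \leq j} z_i z_i^* = V_j \operatorname{diag}(\lambda^{(j)}) V_j^*$ at every step, where $\lambda^{(j)}$ denotes the $j$-th column of $\Lambda$. Property (b) is then automatic from this invariant.

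The recursion runs as follows. Initialize $V_0 := U_1 \in \operatorname{U}(d)$, which trivially diagonalizes $A_0 = 0$. Having built $V_{j-1}$, write the unknown as $z_j = V_{j-1} w^{(j)}$; then the interlacing relation $\lambda^{(j)} \sqsupseteq \lambda^{(j-1)}$ together with $\|w^{(j)}\|^2 = 1$ and the matrix determinant lemma (as in the proof of Lemma~\ref{lem.multiplicity decrease}) force each $|w^{(j)}_i|^2$ to equal an explicit rational expression in the entries of $\lambda^{(j-1)}$ and $\lambda^{(j)}$ — the usual secular-equation formula — which is nonnegative by interlacing. The remaining freedom (phases on the simple coordinates of $\lambda^{(j-1)}$, and a unitary rotation within any eigenspace of $A_{j-1}$ of positive multiplicity, always including the zero eigenspace of dimension at least $d-j+1$ when $j \leq d$) is furnished by the factor $U_j \in \mathbb{T}^{\min(j-1,d)} \times \operatorname{U}(\max(d-j+1,0))$. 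Once $w^{(j)}$ is fixed, define $V_j$ via a canonical continuous diagonalization of $A_j$ obtained from the explicit rank-one update eigenvector formula.

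For property (a), given $Z = [z_1 \cdots z_n] \in \operatorname{S}(d,n)$ with $\Sigma(Z) = \Lambda$, recover $U$ inductively: pick $V_0 \in \operatorname{U}(d)$ whose first column is $z_1$ (completing by Gram--Schmidt), then at each step define $w^{(j)} := V_{j-1}^* z_j$, extract the phases of its simple-coordinate entries to fill in the $\mathbb{T}^k$ factor of $U_j$, and record the unitary action on the multiplicity eigenspace of $A_{j-1}$ that aligns the diagonalization convention with the observed $z_j$ for the $\operatorname{U}(d-k)$ factor. This provides a continuous inverse branch of the construction.

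The main obstacle is continuity across points of $\operatorname{E}(d,n)$ where consecutive eigenvalue columns share values and the secular-equation formula for $|w^{(j)}_i|^2$ degenerates. The block structure $\mathbb{T}^k \times \operatorname{U}(d-k)$ built into $\operatorname{U}(d,n)$ is tailored to absorb exactly these ambiguities into a continuous global parametrization; continuity of $\Phi$ then reduces to the continuous dependence of eigenvectors under rank-one updates, which holds once the rotational freedom within any eigenspace persisting from $A_{j-1}$ to $A_j$ is supplied by the appropriate $\operatorname{U}(d-k)$ factor. As noted in the remark preceding the lemma, it is precisely the continuity of $\mathbb{T}^1$ — and the corresponding discreteness of $\operatorname{O}(1)$ — that makes this argument work in the complex case and obstructs a straightforward real analogue.
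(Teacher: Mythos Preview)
Your recursive construction is the same eigensteps algorithm from~\cite{Cahill:13} that the paper invokes, so the skeleton is right. The gap is in the continuity argument on the boundary of $\operatorname{E}(d,n)$, which you flag as the main obstacle but do not actually overcome.

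You assert that the fixed block structure $\mathbb{T}^{j-1}\times\operatorname{U}(d-j+1)$ of $U_j$ ``absorbs'' the ambiguities that arise when eigenvalues of $\lambda^{(j-1)}$ collide. But that block structure is rigid: the $\operatorname{U}(d-j+1)$ factor acts on the last $d-j+1$ coordinates, which generically correspond to the zero eigenspace of $A_{j-1}$. It does \emph{not} act on the top $j-1$ coordinates, where coincidences among positive eigenvalues can occur on $\partial\operatorname{E}(d,n)$ (e.g.\ $\lambda^{(2)}=(1,1,0)$). At such points the secular-equation formula for $|w^{(j)}_i|^2$ and the rank-one eigenvector formula for $V_j$ both degenerate to $0/0$, and there is no canonical continuous choice of eigenbasis; your reduction to ``continuous dependence of eigenvectors under rank-one updates'' is precisely what fails here. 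The same issue undermines your direct recovery of $U$ in part~(a): when a coordinate of $w^{(j)}$ vanishes, the phase you want to read off is undefined.

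The paper sidesteps all of this with a different mechanism. It defines $\Phi_0$ only on $\operatorname{int}(\operatorname{E}(d,n))\times\operatorname{U}(d,n)$ via the explicit formulas, observes that the relevant squared quantities form a bounded rational map of $\Lambda$ (hence Lipschitz), and concludes that $\Phi_0$ is uniformly continuous; then it takes the unique continuous extension to the closure using completeness of $\operatorname{S}(d,n)$. Property~(a) is obtained not by constructing $U$ directly but by combining Lemma~\ref{lem.open and dense in S} (density of $\Sigma^{-1}(\operatorname{int}\operatorname{E}(d,n))$) with compactness of $\operatorname{U}(d,n)$ to extract a subsequential limit. These two devices---extension by uniform continuity, and density plus compactness---are the missing ingredients in your proposal.
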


\begin{proof}
We will define $\Phi$ explicitly on $\operatorname{int}(\operatorname{E}(d,n))\times\operatorname{U}(d,n)$ and then extend continuously to the full domain.
For each $\Lambda=[\lambda_1\cdots\lambda_n]\in\operatorname{int}(\operatorname{E}(d,n))$ and $j\in[n]$, denote $r_j:=\min\{j+1,d\}$, define $v_j,w_j\in\mathbb{R}^{r_j}$ by
\[
(v_j)_i:=\sqrt{-\frac{\displaystyle\prod_{i'\in[r_j]}((\lambda_j)_i-(\lambda_{j+1})_{i'})}{\displaystyle\prod_{\substack{i'\in[r_j]\\i'\neq i}}((\lambda_j)_i-(\lambda_{j})_{i'})}},
\quad
(w_j)_i:=\sqrt{\frac{\displaystyle\prod_{i'\in[r_j]}((\lambda_{j+1})_i-(\lambda_{j})_{i'})}{\displaystyle\prod_{\substack{i'\in[r_j]\\i'\neq i}}((\lambda_{j+1})_i-(\lambda_{j+1})_{i'})}},
\]
and define $W_j\in\operatorname{U}(r_j)$ by
\[
(W_j)_{i,i'}:=\frac{(v_j)_i (w_j)_{i'}}{(\lambda_{j+1})_{i'}-(\lambda_j)_i}.
\]
By Theorem~7 in~\cite{Cahill:13}, $\Sigma^{-1}(\Lambda)$ equals the set of matrices $[z_1\cdots z_n]\in\mathbb{C}^{d\times n}$ obtained from all $(U_1,V_1,\ldots,V_{n-1})\in\operatorname{U}(d,n)$ by taking $z_1:=U_1e_1$ and applying the following iteration for $j\in[n-1]$:
\[
z_{j+1}:=U_jV_j\left[\begin{array}{c}v_j\\0\end{array}\right],
\qquad
U_{j+1}:=U_jV_j\left[\begin{array}{cc}W_j&0\\0&I_{d-r_j}\end{array}\right].
\]
This process defines a map $\Phi_0\colon\operatorname{int}(\operatorname{E}(d,n))\times\operatorname{U}(d,n)\to\operatorname{S}(d,n)$ such that
\begin{itemize}[leftmargin=*,labelindent=15pt,labelsep=5pt]
\item[(a')]
for every $Z\in\Sigma^{-1}(\operatorname{int}(\operatorname{E}(d,n)))$, there exists $U\in\operatorname{U}(d,n)$ such that $\Phi_0(\Sigma(Z),U)=Z$, and
\item[(b')]
for every $\Lambda\in\operatorname{int}(\operatorname{E}(d,n))$ and $U\in\operatorname{U}(d,n)$, it holds that $\Sigma(\Phi_0(\Lambda,U))=\Lambda$.
\end{itemize}
Observe that $R\colon\Lambda\mapsto(\{(v_j)_i^2\}_{i,j},\{(W_j)_{i,i'}^2\}_{j,i,i'})$ is a rational map with bounded domain, and furthermore, $R$ is bounded since $\Phi_0$ is bounded.
It follows that $R$ is Lipschitz, and so $\Phi_0$ is uniformly continuous since $\Phi_0(\Lambda,U)$ is a polynomial in $(\sqrt{R(\Lambda)},U)$ with bounded domain.
In addition, $\operatorname{S}(d,n)$ is a complete metric space, and so there exists a unique continuous extension $\Phi$ of $\Phi_0$ to all of $\operatorname{E}(d,n)\times\operatorname{U}(d,n)$.

We now verify properties (a) and (b).
For (a), take any $Z\in\operatorname{S}(d,n)$.
By Lemma~\ref{lem.open and dense in S}, there exists a sequence $\{Z_i\}_{i=1}^\infty$ in $\operatorname{S}(d,n)$ such that $\Sigma(Z_i)\in\operatorname{int}(\operatorname{E}(d,n))$ for every $i$ and $Z_i\to Z$.
By (a'), it holds that for each $i$, there exists $U_i\in\operatorname{U}(d,n)$ such that $\Phi(\Sigma(Z_i),U_i)=Z_i$.
By compactness, there is a subsequence of $\{U_i\}_{i=1}^\infty$ with some limit point $U\in\operatorname{U}(d,n)$.
By continuity, it follows that $\Phi(\Sigma(Z),U)=Z$.
For (b), take any $\Lambda\in\operatorname{E}(d,n)$ and $U\in\operatorname{U}(d,n)$.
There exists a sequence $\{\Lambda_i\}_{i=1}^\infty$ in $\operatorname{int}(\operatorname{E}(d,n))$ such that $\Lambda_i\to\Lambda$.
By (b'), it holds that $\Sigma(\Phi(\Lambda_i,U))=\Lambda_i$ for each $i$.
Since $\Sigma$ and $\Phi$ are continuous, taking limits of both sides gives $\Sigma(\Phi(\Lambda,U))=\Lambda$.
\end{proof}

\begin{lemma}
\label{lem.eigensteps is open}
The eigensteps map $\Sigma\colon\operatorname{S}(d,n)\to\operatorname{E}(d,n)$ is open in the standard subspace topologies for $\operatorname{S}(d,n)\subseteq\mathbb{C}^{d\times n}$ and $\operatorname{E}(d,n)\subseteq\mathbb{R}^{d\times n}$.
\end{lemma}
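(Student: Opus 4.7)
The plan is to apply Lemma~\ref{lem.suff cond for open} pointwise, using Lemma~\ref{lem.partial inverse} to furnish the required local right-inverse. Since the point will be arbitrary, this establishes openness of $\Sigma$ everywhere.

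First, I would record that $\Sigma$ is continuous. The map $Z=[z_1\cdots z_n]\mapsto\bigl(\sum_{i\in[j]}z_iz_i^*\bigr)_{j\in[n]}$ is polynomial, and for each $j$ the output matrix is self-adjoint, so its vector of eigenvalues (sorted in monotonically decreasing order) depends continuously on the matrix entries by a standard perturbation-theoretic fact. Concatenating these $n$ continuous vectors gives $\Sigma$.

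Next, fix $Z\in\operatorname{S}(d,n)$. Part~(a) of Lemma~\ref{lem.partial inverse} produces a $U\in\operatorname{U}(d,n)$ such that $\Phi(\Sigma(Z),U)=Z$. Define $g\colon\operatorname{E}(d,n)\to\operatorname{S}(d,n)$ by $g(\Lambda):=\Phi(\Lambda,U)$; this is continuous since $\Phi$ is, it satisfies $g(\Sigma(Z))=Z$ by construction, and part~(b) of Lemma~\ref{lem.partial inverse} gives $\Sigma(g(\Lambda))=\Sigma(\Phi(\Lambda,U))=\Lambda$ for every $\Lambda\in\operatorname{E}(d,n)$. Taking $W:=\operatorname{E}(d,n)$ (which is trivially open in its own subspace topology), we verify the hypotheses of Lemma~\ref{lem.suff cond for open} with $f=\Sigma$ and $x=Z$: namely, $\Sigma(Z)\in W$, $\Sigma$ is continuous at $Z$, and $g\colon W\to\operatorname{S}(d,n)$ is continuous with $(g\circ\Sigma)(Z)=Z$ and $\Sigma\circ g=\operatorname{id}_W$. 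Therefore $\Sigma$ is open at $Z$, and since $Z$ was arbitrary, $\Sigma$ is open.

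I do not anticipate any real obstacle here, since the heavy lifting has already been done in Lemma~\ref{lem.partial inverse}: the construction of $\Phi$ as a continuous global section (after extending from the interior via uniform continuity) is precisely what allows us to use $W=\operatorname{E}(d,n)$ rather than a small neighborhood. The only mild conceptual point is recognizing that Lemma~\ref{lem.partial inverse} delivers a section through any prescribed preimage point $Z$ by varying $U$, which is exactly what Lemma~\ref{lem.suff cond for open} demands.
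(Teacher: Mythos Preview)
Your proof is correct and matches the paper's approach essentially line for line: apply Lemma~\ref{lem.suff cond for open} at an arbitrary $Z$ with $W=\operatorname{E}(d,n)$ and $g(\Lambda)=\Phi(\Lambda,U)$, where $U$ is supplied by Lemma~\ref{lem.partial inverse}(a), and use Lemma~\ref{lem.partial inverse}(b) to verify $\Sigma\circ g=\operatorname{id}_W$. Your explicit remark on the continuity of $\Sigma$ is a small but welcome addition, since Lemma~\ref{lem.suff cond for open} does require it and the paper leaves this implicit.
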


\begin{proof}
Take any $Z\in\operatorname{S}(d,n)$.
By Lemma~\ref{lem.partial inverse}(a), there exists $U\in\operatorname{U}(d,n)$ such that $\Phi(\Sigma(Z),U)=Z$.
We may apply Lemma~\ref{lem.suff cond for open} with
\[
f:=\Sigma,
\qquad
W:=\operatorname{E}(d,n),
\qquad
g\colon\Lambda\mapsto\Phi(\Lambda,U).
\]
Indeed, Lemma~\ref{lem.partial inverse}(a) gives $(g\circ f)(Z)=\Phi(\Sigma(Z),U)=Z$ and Lemma~\ref{lem.partial inverse}(b) gives $(f\circ g)(\Lambda)=\Sigma(\Phi(\Lambda,U))=\Lambda$ for every $\Lambda\in\operatorname{E}(d,n)$.
Thus, $\Sigma$ is open at $Z$.
Since $Z$ was arbitrary, it follows that $\Sigma$ is open.
\smartqed
\end{proof}

We may now prove the main result of this section:

\begin{proof}[of Theorem~\ref{thm.generalized frame potential}]
By Lemma~\ref{lem.eigensteps is open}, the eigensteps map $\Sigma$ is open.
As such, $\operatorname{FP}_f$ is the composition of the open map $Z\mapsto \Sigma(Z)$ with the convex map $\Lambda\mapsto f(\Lambda e_d)$, and so the result follows from Lemma~\ref{lem.convex of open}.
\smartqed
\end{proof}

\section{Geometric invariant theory}
\label{sec.git}

The recent literature showcases an industry of using geometric methods to prove fundamental results in frame theory~\cite{NeedhamS:21,NeedhamS:arXiv1,NeedhamS:arXiv2}.
This section represents another step in this direction.
We say $Z\in\mathbb{C}^{d\times n}$ is \textbf{full spark} if every $d\times d$ submatrix of $Z$ is invertible~\cite{AlexeevCM:12}.
We will prove the following strengthening of the Benedetto--Fickus theorem in the special case where $n>d$:

\begin{theorem}
\label{thm.gradient flow to untf}
Fix $d,n\in\mathbb{N}$ with $n>d$, and consider the gradient flow $F\colon\operatorname{S}(d,n)\times[0,\infty)\to\operatorname{S}(d,n)$ defined by the differential equation
\[
F(Z_0,0)=Z_0,
\qquad
\frac{d}{dt}F(Z_0,t)=-\operatorname{grad}\operatorname{FP}(F(Z_0,t)),
\]
where $\operatorname{grad}$ denotes Riemannian gradient.
If $Z_0$ is full spark, then $\lim_{t\to\infty}F(Z_0,t)$ is a unit norm tight frame.
\end{theorem}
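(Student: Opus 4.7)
The plan is to reinterpret the frame potential as the squared norm of a moment map and deploy the Kirwan--Ness machinery from geometric invariant theory. Since $\operatorname{FP}$ is invariant under independent phase rotations of the columns, it descends to the K\"ahler manifold $M:=(\mathbb{CP}^{d-1})^n$ equipped with the product Fubini--Study form, on which $\operatorname{PU}(d)$ acts Hamiltonianly by left multiplication with moment map $\mu([Z])=ZZ^*-\tfrac{n}{d}I_d$. A direct computation gives $\|\mu([Z])\|_F^2=\operatorname{FP}(Z)-\tfrac{n^2}{d}$, so the zero level set of $\mu$ is exactly the set of unit norm tight frames, and the gradient flow of $\operatorname{FP}$ on $\operatorname{S}(d,n)$ descends to the gradient flow of $\|\mu\|^2$ on $M$.

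First I would invoke the \L{}ojasiewicz inequality for real analytic functions: since $\operatorname{FP}$ is polynomial on the compact real analytic manifold $\operatorname{S}(d,n)$, every gradient trajectory has finite length and converges to a single critical point $Z_\infty$. By Proposition~\ref{prop.focp} and Corollary~\ref{cor.focp feature}, either $Z_\infty$ is a unit norm tight frame or $Z_\infty Z_\infty^*$ has a proper top eigenspace $E\subsetneq\mathbb{C}^d$ of dimension $d'<d$ containing exactly $n'$ columns of $Z_\infty$ with $n'/d'>n/d$. The task reduces to excluding this second alternative whenever $Z_0$ is full spark.

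The crux is a standard K\"ahler identity: the gradient of $\|\mu\|^2$ is tangent to the orbits of the complexified group $G:=\operatorname{PGL}_d(\mathbb{C})$, so $[Z_t]\in G\cdot[Z_0]$ for every finite $t$ and $[Z_\infty]\in\overline{G\cdot[Z_0]}$. For this $G$-action on $(\mathbb{CP}^{d-1})^n$ with its natural polarization, the Hilbert--Mumford numerical criterion says that $[Z]$ is semistable if and only if for every proper subspace $V\subsetneq\mathbb{C}^d$ one has
\[
|\{j\in[n]:z_j\in V\}|\leq\frac{\dim V}{d}\cdot n,
\]
and stable if and only if the strict inequality holds. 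The full spark hypothesis together with $n>d$ forces $[Z_0]$ to be stable: any $d'+1\leq d$ columns of $Z_0$ are linearly independent, so at most $d'$ columns can lie in any proper $V$ of dimension $d'$, and $d'<d'\cdot n/d$.

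Finally, since $[Z_0]$ is stable, the Kempf--Ness theorem tells us that $\overline{G\cdot[Z_0]}$ is contained in the semistable locus, so $[Z_\infty]$ is itself semistable and still satisfies the Hilbert--Mumford inequalities. But the alternative supplied by Corollary~\ref{cor.focp feature} would produce a subspace $E$ of dimension $d'$ containing $n'>d'\cdot n/d$ columns, which is incompatible with semistability. Hence $Z_\infty$ must be a unit norm tight frame. The main obstacle I anticipate is the careful deployment of the Kirwan--Ness machinery: verifying via the K\"ahler identity $\operatorname{grad}\|\mu\|^2\sim J\cdot\xi_\mu$ that the gradient flow preserves complexified orbits, and confirming that stability at $[Z_0]$ genuinely propagates to semistability at $[Z_\infty]$ rather than allowing the trajectory to drift into the unstable locus.
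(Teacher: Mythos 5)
Your approach reaches the theorem through the symplectic/K\"ahler picture rather than the paper's affine one, but the two routes are structurally parallel. The paper tensors up columns via $\tau\colon[z_1\cdots z_n]\mapsto z_1\otimes\cdots\otimes z_n\in(\mathbb{C}^d)^{\otimes n}$, shows the trajectory stays inside a single $\operatorname{SL}(d)\times\mathbb{C}^\times$-orbit, and uses Proposition~\ref{prop.semistable invariant} to transport semistability to the limit point; you descend to $(\mathbb{CP}^{d-1})^n$, identify $\operatorname{FP}$ with $\|\mu\|_F^2$ up to an additive constant, and invoke the K\"ahler identity together with Kirwan--Ness machinery. Both proofs rest on the same three Hilbert--Mumford facts: full spark implies (semi)stability, the flow preserves it, and the structured non-minimizing critical points supplied by Corollary~\ref{cor.focp feature} are destabilized by an explicit one-parameter subgroup. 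Your version is more conceptual --- it explains why GIT is relevant by exhibiting the frame potential as a shifted $\|\mu\|^2$ --- at the cost of invoking the Morse theory of $\|\mu\|^2$; the paper's version is more self-contained and hands-on.

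The step you flag as the ``main obstacle'' is indeed the one that needs repair. The Kempf--Ness theorem does not assert that $\overline{G\cdot[Z_0]}$ is contained in the semistable locus, and that inclusion is false for projective orbit closures in general: for $\operatorname{SL}_2$ acting on $\mathbb{P}(\operatorname{Sym}^2\mathbb{C}^2)$, the closure of the (polystable) orbit of $[x^2-y^2]$ contains the unstable point $[x^2]$. The correct input is Kirwan's theorem that the gradient flow of $\|\mu\|^2$ preserves the Kirwan stratification, whose open minimal stratum is precisely the semistable locus and whose associated critical set is $\mu^{-1}(0)$; hence a semistable initial condition flows to a point of $\mu^{-1}(0)$, not merely to a point of the orbit closure. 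Quoting Kirwan's theorem here makes your argument complete, and in fact closes a parallel loophole in the paper's sketch, where Proposition~\ref{prop.semistable invariant} is applied to the closure of the enlarged $\operatorname{SL}(d)\times\mathbb{C}^\times$-orbit rather than the $\operatorname{SL}(d)$-orbit for which it was proved; the extra scalar factor is exactly what allows a semistable orbit to accumulate on a nonzero unstable point.
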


Given a critical point that is not a unit norm tight frame, almost every member of any neighborhood of the critical point is full spark, which by Theorem~\ref{thm.gradient flow to untf} is sent by gradient flow to a unit norm tight frame.
The absence of a basin of attraction indicates that the critical point is not a local minimizer.
Since the frame potential is real analytic (in fact, polynomial), this can be made rigorous with a standard application of the \L{}ojasiewicz inequality (\`{a} la~\cite{AbsilK:06}), thereby proving the Benedetto--Fickus theorem.
Furthermore, Theorem~\ref{thm.gradient flow to untf} easily extends to the real case since the real submanifold of $\operatorname{S}(d,n)$ is invariant under gradient flow.
To prove Theorem~\ref{thm.gradient flow to untf}, we identify a property P such that
\begin{itemize}
\item
every full spark member of $\operatorname{S}(d,n)$ satisfies P, and
\item
gradient flow (and its limit) preserves P, but
\item
no non-minimizing critical point of the frame potential satisfies P.
\end{itemize}
It turns out that this property originated from the \textit{geometric invariant theory} developed by Mumford~\cite{MumfordFK:94} using ideas from Hilbert~\cite{Hilbert:93}; see~\cite{Thomas:06} for a survey.
We start by introducing a few terms in some level of generality.

Let $G$ be a reductive group acting linearly on a finite-dimensional complex vector space $V$, such as $\operatorname{GL}(V)$, $\operatorname{SL}(V)$, or $\operatorname{O}(V)$.
A nonzero vector $v\in V$ is said to be \textbf{unstable} under the action of $G$ if the closure $\operatorname{cl}(G\cdot v)$ of the orbit of $v$ contains $0$, and otherwise $v$ is \textbf{semi-stable} under the action of $G$.

\begin{proposition}
\label{prop.semistable invariant}
Given a nonzero vector $v\in V$ that is semi-stable under the action of $G$, it holds that every member of $\operatorname{cl}(G\cdot v)$ is also semi-stable under the action of $G$.
\end{proposition}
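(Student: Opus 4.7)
The plan is to argue that semi-stability propagates through the orbit closure via the chain of inclusions $\operatorname{cl}(G\cdot w)\subseteq \operatorname{cl}(G\cdot v)$, from which $0\notin\operatorname{cl}(G\cdot w)$ follows immediately from $0\notin\operatorname{cl}(G\cdot v)$. The very first observation is that the semi-stability hypothesis already rules out $w=0$ for any $w\in\operatorname{cl}(G\cdot v)$, so the ``nonzero'' part of the conclusion is automatic.

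The main step is to establish $G\cdot w\subseteq \operatorname{cl}(G\cdot v)$ for each $w\in\operatorname{cl}(G\cdot v)$. Since $V$ is a finite-dimensional complex vector space and hence metrizable, I may write $w=\lim_i g_i\cdot v$ as a limit of a sequence from $G\cdot v$. Fixing any $g\in G$ and using that its action on $V$ is continuous (indeed linear), I get
\[
g\cdot w \;=\; g\cdot\lim_i g_i\cdot v \;=\; \lim_i (gg_i)\cdot v \;\in\; \operatorname{cl}(G\cdot v).
\]
Hence $G\cdot w\subseteq \operatorname{cl}(G\cdot v)$, and taking closures on both sides (using that $\operatorname{cl}(G\cdot v)$ is itself closed) yields $\operatorname{cl}(G\cdot w)\subseteq\operatorname{cl}(G\cdot v)$. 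Combined with $0\notin\operatorname{cl}(G\cdot v)$ and $w\neq 0$, this is exactly the statement that $w$ is semi-stable.

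I do not anticipate any real obstacle: the proposition is a formal consequence of continuity of the linear $G$-action together with the elementary ``nested orbit closure'' principle. The only bookkeeping is to be careful that ``semi-stable'' requires both $w\neq 0$ and $0\notin\operatorname{cl}(G\cdot w)$, and to note that sequences suffice in place of nets since the ambient space is metrizable. Notably, no deeper structure of reductive groups is invoked at this stage; that structure will only become relevant when the paper later exploits the Kempf--Ness or Hilbert--Mumford machinery to relate semi-stability to the frame-theoretic objects of interest.
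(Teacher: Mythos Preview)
Your proof is correct and takes essentially the same approach as the paper's: both use continuity of the linear $G$-action together with sequential limits to show that $G\cdot w\subseteq\operatorname{cl}(G\cdot v)$ whenever $w\in\operatorname{cl}(G\cdot v)$, from which the conclusion follows. The only cosmetic difference is that the paper phrases this as a contrapositive, while you argue directly via the nested orbit closure inclusion and explicitly note that $w\neq 0$ is automatic.
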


\begin{proof}
We prove the contrapositive.
Suppose there exists a nonzero $w\in\operatorname{cl}(G\cdot v)$ that is unstable under the action of $G$.
Then there exist sequences $\{g_i\}_{i=1}^\infty$ and $\{h_j\}_{j=1}^\infty$ in $G$ such that $g_i\cdot v\to w$ and $h_j\cdot w\to0$.
For each $j$, the continuity of the linear action of $h_j$ on $V$ implies that
\[
h_j\cdot w
=h_j\cdot\lim_{i\to\infty}(g_i\cdot v)
=\lim_{i\to\infty}(h_j g_i\cdot v)
\in\operatorname{cl}(G\cdot v),
\]
and so $0=\lim_{j\to\infty}(h_j\cdot w)\in\operatorname{cl}(G\cdot v)$.
Thus, $v$ is unstable under the action of $G$.
\smartqed
\end{proof}

We are interested in the special case where $G=\operatorname{SL}(d)$ acts on $V=(\mathbb{C}^d)^{\otimes n}$ by 
\[
g\cdot (z_1\otimes\cdots\otimes z_n)
:=gz_1\otimes\cdots\otimes gz_n
\]
and extending linearly.
We pass to this space by ``tensoring up columns'' with $\tau\colon\mathbb{C}^{d\times n}\to(\mathbb{C}^d)^{\otimes n}$ defined by $[z_1\cdots z_n]\mapsto z_1\otimes\cdots\otimes z_n$.
Then $Z\in\operatorname{S}(d,n)$ satisfies the previously mentioned property P if $\tau(Z)$ is semi-stable under the action of $\operatorname{SL}(d)$.

\begin{lemma}
\label{lem.git separation}
Fix $d,n\in\mathbb{N}$ with $n>d$.
\begin{itemize}
\item[(a)]
For every full spark $Z\in\operatorname{S}(d,n)$, it holds that $\tau(Z)$ is semi-stable under the action of $\operatorname{SL}(d)$.
\item[(b)]
For every non-minimizing critical point $Z\in\operatorname{S}(d,n)$ of the frame potential, it holds that $\tau(Z)$ is unstable under the action of $\operatorname{SL}(d)$.
\end{itemize}
\end{lemma}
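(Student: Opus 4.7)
The plan is to apply the Hilbert--Mumford numerical criterion: a nonzero $v \in V$ is unstable under $G = \operatorname{SL}(d)$ if and only if some one-parameter subgroup $\lambda$ satisfies $\lim_{t\to 0}\lambda(t)\cdot v = 0$. Every 1-PS of $\operatorname{SL}(d)$ is conjugate to a diagonal one $\operatorname{diag}(t^{a_1},\ldots,t^{a_d})$ with integer weights summing to zero. After the corresponding change of basis (which preserves full spark and maps eigenspaces to eigenspaces) we may assume $\lambda$ is diagonal. Then $\lambda(t)\cdot\tau(Z)$ has coordinates $z_{i_1,1}\cdots z_{i_n,n}\,t^{a_{i_1}+\cdots+a_{i_n}}$, so the limit vanishes if and only if every \emph{valid} tuple---one with $z_{i_j,j}\neq 0$ for all $j$---has strictly positive total weight $\sum_j a_{i_j}$.

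For part (a), I would rule out such a weight assignment when $Z$ is full spark with $n>d$. After sorting so $a_1\leq\cdots\leq a_d$ (with $a_1 < 0$ since the weights are nontrivial and sum to zero), the greedy selection $m(j) := \min\{i : z_{i,j}\neq 0\}$ is well-defined because full spark forces every column to be nonzero. The key combinatorial input is that full spark bounds $N_i := |\{j : m(j) \geq i\}|$ (the count of columns supported entirely in rows $i,\ldots,d$) by $d - i + 1$ for $i \geq 2$; otherwise these $N_i$ vectors would lie in a $(d-i+1)$-dimensional subspace of $\mathbb{C}^d$, and choosing any $d$ of them (padding with other columns if $N_i<d$, since $n\geq d$) would yield a singular $d\times d$ submatrix. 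Summation by parts together with $\sum_i a_i = 0$ then gives
\[
\sum_{j=1}^n a_{m(j)} = n a_1 + \sum_{i=2}^d N_i(a_i - a_{i-1}) \leq n a_1 + \sum_{i=2}^d (d-i+1)(a_i - a_{i-1}) = (n-d)a_1 < 0,
\]
where the final inequality uses $n > d$. This exhibits a valid tuple with nonpositive total weight, contradicting instability.

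For part (b), I would build an explicit destabilizing 1-PS using Corollary~\ref{cor.focp feature}: the max-eigenspace $E$ of $ZZ^*$ has dimension $d' < d$ and contains exactly $n'$ columns of $Z$ forming a UNTF for $E$, with $n'/d' > n/d$. Since every column of $Z$ is an eigenvector of $ZZ^*$ by Proposition~\ref{prop.focp}, each column lies either in $E$ or in an eigenspace orthogonal to $E$. Applying a unitary $u \in \operatorname{SU}(d) \subset \operatorname{SL}(d)$ to align $E$ with $\operatorname{span}(e_1,\ldots,e_{d'})$ (possible since any unitary can be rescaled by a phase to have unit determinant) leaves the $n'$ columns indexed by $J$ supported in rows $1,\ldots,d'$ and the remaining $n-n'$ columns supported in rows $d'+1,\ldots,d$. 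Now take the block 1-PS with weights $a_i := d-d'$ for $i\leq d'$ and $a_i := -d'$ for $i>d'$; this lies in $\operatorname{SL}(d)$ since $d'(d-d') + (d-d')(-d') = 0$. Every valid tuple then has total weight exactly $n'(d-d') + (n-n')(-d') = n'd - nd'$, which is strictly positive by $n'/d' > n/d$. Hence $\lambda(t)\cdot\tau(Z)\to 0$ and $\tau(Z)$ is unstable.

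The hardest step is the combinatorial accounting in part (a): converting full spark into the bound $N_i \leq d-i+1$ and then assembling the summation-by-parts identity to land precisely on $(n-d)a_1$. Once Corollary~\ref{cor.focp feature} is invoked, part (b) reduces to a transparent weight computation in the right basis.
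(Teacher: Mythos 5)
Your proof is correct, and your argument for part (a) is a genuinely different route from the paper's, while part (b) is essentially identical to the paper's proof.

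For part (a), after the same reduction to a diagonal one-parameter subgroup via the Hilbert--Mumford criterion, the paper isolates the columns with nonzero $d$th entry, picks $d$ of them, expands the corresponding $d\times d$ determinant to find a permutation $\sigma$ with $\prod_t(\tilde z_{j_t})_{\sigma(t)}\neq 0$, and then assigns weight index $\sigma(t)$ to those $d$ columns and the minimal index $d$ to the other $n-d$, landing exactly on total weight $(n-d)w(d)<0$. You instead use the greedy assignment $m(j)=\min\{i:z_{i,j}\neq0\}$, convert full spark into the counting bound $N_i\leq d-i+1$ for $i\geq2$, and close with Abel summation to bound the greedy total by $(n-d)a_1<0$. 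Both arrive at the same numerical bound (your $a_1$ is the paper's $w(d)$ after reversing the sort), but your argument trades the paper's explicit $d\times d$ determinant expansion and permutation bookkeeping for a dimension-count on nested coordinate subspaces plus a summation-by-parts identity. The two calculations are independently interesting: the paper's pins down an exact value of a specific coordinate's weight, while yours gives an a priori inequality valid for the canonical greedy choice. Your proposal has a stray clause in the setup (``maps eigenspaces to eigenspaces'' is irrelevant to part (a)), but the combinatorics check out: the bound $N_i\leq d-i+1$ is justified exactly as you say, $N_1=n$ since full spark rules out zero columns, and the telescoping computation $na_1+\sum_{i=2}^d(d-i+1)(a_i-a_{i-1})=(n-d)a_1$ is correct using $\sum_i a_i=0$.

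For part (b), your argument matches the paper's: use Corollary~\ref{cor.focp feature}, conjugate by a unitary aligning the maximal eigenspace with the first $d'$ coordinates, take the block-diagonal weight vector $(d-d',\ldots,d-d',-d',\ldots,-d')$, and observe that every valid tuple has total weight $n'd-nd'>0$. The paper also tracks a column permutation matrix $P$, but as you observe this is unnecessary; the support structure you need (columns in $J$ confined to the first $d'$ rows, the rest to the last $d-d'$) follows directly from each column being an eigenvector and distinct eigenspaces being orthogonal.
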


Assuming the lemma for now, we proceed with the proof of Theorem~\ref{thm.gradient flow to untf}.

\begin{proof}[sketch of Theorem~\ref{thm.gradient flow to untf}]
For every $Z\in\operatorname{S}(d,n)$, Lemma~\ref{lem.euclidean gradient} gives the Euclidean gradient $\nabla\operatorname{EP}(Z)=4ZZ^*Z$, and so the Riemannian gradient $\operatorname{grad}\operatorname{FP}(Z)$ is the matrix obtained by normalizing the columns of $ZZ^*Z$.
Consider the action of $\operatorname{GL}(d)\times(\mathbb{C}^\times)^n$ on $\mathbb{C}^{d\times n}$ defined by $(g,\{a_i\}_{i\in[n]})\cdot X:=gX\operatorname{diag}\{a_i\}_{i\in[n]}^{-1}$.
Then $\operatorname{grad}\operatorname{FP}(Z)$ is in the tangent space at $Z$ of the orbit $(\operatorname{GL}(d)\times(\mathbb{C}^\times)^n)\cdot Z$, and so $F(Z_0,t)$ resides in one such orbit for all $t\geq0$.
For every $g\in\operatorname{GL}(d)$, it holds that $g/\operatorname{det}(g)^{1/d}\in\operatorname{SL}(d)$, and so normalizing gives
\[
F(Z_0,t)\in(\operatorname{SL}(d)\times(\mathbb{C}^\times)^n)\cdot Z_0
\qquad
\forall t\geq0.
\]
Next, consider the action of $\operatorname{SL}(d)\times\mathbb{C}^\times$ on $(\mathbb{C}^d)^{\otimes n}$ defined by $(g,a)\cdot(z_1\otimes\cdots\otimes z_n):=a(gz_1\otimes\cdots\otimes gz_n)$ and extending linearly.
Then
\[
\tau(F(Z_0,t))\in(\operatorname{SL}(d)\times\mathbb{C}^\times)\cdot \tau(Z_0)
\qquad
\forall t\geq0.
\]
Suppose $Z_0$ is full spark.
Then by Lemma~\ref{lem.git separation}(a), $\tau(Z_0)$ is semi-stable under the action of $\operatorname{SL}(d)$.
Proposition~\ref{prop.semistable invariant} then implies that every point in $\operatorname{cl}((\operatorname{SL}(d)\times\mathbb{C}^\times)\cdot \tau(Z_0))$ is also semi-stable under the action of $\operatorname{SL}(d)$.
Importantly, this includes $\tau(Z_\infty)$, where $Z_\infty:=\lim_{t\to\infty}F(Z_0,t)$.
As the limit point of gradient flow, $Z_\infty$ is a critical point of the frame potential, and by Lemma~\ref{lem.git separation}(b), $Z_\infty$ is a minimizer.
It follows that $Z_\infty$ is a unit norm tight frame.
\smartqed
\end{proof}

It remains to prove Lemma~\ref{lem.git separation}, which will require some technology from geometric invariant theory.
For the moment, let's return to the general setting of a reductive group $G$ acting linearly on a finite-dimensional complex vector space $V$.
To discern whether a point is unstable under the action of $G$, it is convenient to work with small subgroups of $G$.
A \textbf{one-parameter subgroup} of $G$ is a homomorphism of algebraic groups $\lambda\colon\mathbb{C}^\times\to G$, meaning there exists a decomposition $V=\bigoplus_{i\in I}V_i$ and integer weights $w\colon I\to\mathbb{Z}$ such that for every $i\in I$, $v\in V_i$, and $t\in\mathbb{C}^\times$, it holds that
\[
\lambda(t)\cdot v
=t^{w(i)}v.
\]
Of course, we may conclude that $v\in V\setminus\{0\}$ is unstable under the action of $G$ if there exists a one-parameter subgroup $\lambda$ of $G$ such that
\[
\lim_{t\to0}\lambda(t)\cdot v=0.
\]
Amazingly, the converse also holds:

\begin{proposition}[Hilbert--Mumford criterion~\cite{Hilbert:93,MumfordFK:94}]
A nonzero vector $v\in V$ is unstable under the action of $G$ if and only if there exists a one-parameter subgroup $\lambda$ of $G$ such that
\[
\lim_{t\to0}\lambda(t)\cdot v=0.
\]
\end{proposition}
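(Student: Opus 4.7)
The plan is to prove each direction separately. The $(\Leftarrow)$ direction is immediate: any one-parameter subgroup $\lambda$ with $\lim_{t\to 0}\lambda(t)\cdot v = 0$ yields $0 \in \operatorname{cl}(\lambda(\mathbb{C}^\times)\cdot v) \subseteq \operatorname{cl}(G\cdot v)$, so $v$ is unstable by definition.

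For the $(\Rightarrow)$ direction, my approach would proceed in two stages. First, given that $0 \in \operatorname{cl}(G\cdot v)$, I would apply a curve selection lemma from algebraic geometry to the constructible orbit $G\cdot v$ to produce a smooth affine curve $C$, a distinguished point $p\in C$, and a morphism $\phi\colon C\setminus\{p\}\to G$ with $\phi(q)\cdot v \to 0$ as $q\to p$. Passing to the completed local ring at $p$ and writing $K=\mathbb{C}((t))$, $\mathcal{O}=\mathbb{C}[[t]]$, this data becomes a $K$-point of $G$ whose action on $v$ sends $t\to 0$ to the origin. The heart of the argument is the second stage: converting this $K$-point into an honest algebraic homomorphism $\mathbb{C}^\times\to G$. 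For this I would invoke the Cartan decomposition $G(K) = G(\mathcal{O})\cdot \Lambda\cdot G(\mathcal{O})$, where $\Lambda$ is the image in $G(K)$ of the cocharacter lattice of a maximal torus of $G$. Writing the $K$-point as $g_1\cdot \lambda\cdot g_2$ with $g_i\in G(\mathcal{O})$ and $\lambda$ an algebraic one-parameter subgroup, I would set $w := g_2(0)\cdot v$; since $g_1$ extends continuously to $t=0$ with value $g_1(0)\in G$, the convergence $g_1\lambda g_2\cdot v\to 0$ forces $\lim_{t\to 0}\lambda(t)\cdot w = 0$. Conjugating $\lambda$ by $g_2(0)^{-1}$ then produces a one-parameter subgroup $\lambda'$ of $G$ satisfying $\lim_{t\to 0}\lambda'(t)\cdot v = 0$, as desired.

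The main obstacle is the Cartan decomposition step itself, which is a deep structural result from the theory of reductive groups over local fields (originally due to Iwahori--Matsumoto and adapted by Mumford and Kempf--Rousseau to this geometric setting). Reductivity of $G$ enters essentially here, since unipotent subgroups over $K$ need not interact compatibly with the valuation, and the decomposition fails in general without the reductivity hypothesis. An alternative route that sidesteps some of this algebraic machinery would invoke the Kempf--Ness framework: fix a maximal compact $K_0 \subseteq G$ and a $K_0$-invariant Hermitian norm on $V$, and study the gradient flow of $g\mapsto \|g\cdot v\|^2$ on $G/K_0$; when $v$ is unstable, this flow escapes to infinity in $G/K_0$, and Kempf's theorem on optimal destabilizing one-parameter subgroups identifies its asymptotic direction with a $\lambda$ realizing the Hilbert--Mumford condition.
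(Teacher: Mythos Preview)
The paper does not prove the Hilbert--Mumford criterion at all; it simply states the proposition with citations to Hilbert and to Mumford--Fogarty--Kirwan, then immediately moves on to proving Lemma~\ref{lem.git separation}. So there is no ``paper's own proof'' to compare against---the result is imported as a black box.

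Your sketch is a correct outline of one of the standard proofs (essentially Mumford's argument via the Cartan/Iwahori decomposition $G(K)=G(\mathcal{O})\cdot\Lambda\cdot G(\mathcal{O})$). The one step you pass over a bit quickly is the claim that $\lambda(t)g_2(t)\cdot v\to 0$ forces $\lambda(t)\cdot w\to 0$ for $w=g_2(0)\cdot v$; this is not automatic because $\lambda(t)$ may blow up on some weight spaces, but a short weight-space computation (project to each $\lambda$-weight component and compare orders in $t$) confirms that the nonpositive-weight components of $w$ must vanish, which is exactly what is needed. Your identification of the Cartan decomposition as the deep input, and of reductivity as the place it enters, is accurate. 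The alternative Kempf--Ness/optimal-destabilizing route you mention is also valid and is in fact closer in spirit to how the paper uses the criterion (via gradient flow), though the paper does not develop either proof.
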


\begin{proof}[of Lemma~\ref{lem.git separation}]
(a)
Select any one-parameter subgroup $\lambda\colon\mathbb{C}^\times\to\operatorname{SL}(d)$.
By the determinant, the integer weights $w\colon[d]\to\mathbb{Z}$ of $\lambda$ satisfy $\sum_{i\in[d]}w(i)=0$.
Reindex as necessary so that $w(1)\geq\cdots\geq w(d)$.
If $w(i)=0$ for every $i\in[d]$, then $\lambda(t)$ equals the identity matrix for all $t\in\mathbb{C}^\times$, and so $\lim_{t\to0}\lambda(t)\cdot \tau(Z)=\tau(Z)\neq 0$.
Otherwise, $w(1)>0>w(d)$.
Put $D(t):=\operatorname{diag}\{t^{w(i)}\}_{i\in[d]}$ and take $S\in\operatorname{GL}(d)$ such that $\lambda(t)=SD(t)S^{-1}$.
Given a full spark $Z\in\operatorname{S}(d,n)$, put $\tilde{Z}:=[\tilde{z}_1\cdots\tilde{z}_n]:=S^{-1}Z$.
Then
\begin{align}
S^{-1}\lambda(t)\cdot \tau(Z)
\nonumber
&=D(t)S^{-1}\cdot(z_1\otimes\cdots\otimes z_n)\\
\nonumber
&=D(t)\tilde{z}_1\otimes\cdots\otimes D(t)\tilde{z}_n\\
\nonumber
&=\bigg(\sum_{i_1\in[d]}t^{w(i_1)}(\tilde{z}_1)_{i_1}e_{i_1}\bigg)\otimes\cdots\otimes\bigg(\sum_{i_n\in[d]}t^{w(i_n)}(\tilde{z}_n)_{i_n}e_{i_n}\bigg)\\
\label{eq.find a nonvanishing term}
&=\sum_{i_1,\ldots,i_n\in[d]}\bigg(t^{\sum_{j\in[n]}w(i_j)}\prod_{j\in[n]}(\tilde{z}_j)_{i_j}\bigg)e_{i_1}\otimes\cdots \otimes e_{i_n}.
\end{align}
We will use the fact that $Z$ (and thus $\tilde{Z}$) is full spark to identify a component of \eqref{eq.find a nonvanishing term} that does not vanish as $t\to0$.
By the Hilbert--Mumford criterion, this suffices to prove the result.

Let $k$ denote the number of entries in the $d$th row of $\tilde{Z}$ that equal zero, let $j_1,\ldots,j_k\in[n]$ denote the corresponding column indices, and index the remaining indices as $\{j_{k+1},\ldots,j_n\}:=[n]\setminus\{j_1,\ldots,j_k\}$.
(In the degenerate case where $k=0$, we may take $j_\ell=\ell$ for each $\ell\in[n]$.)
Writing $\tilde{Z}=[\tilde{z}_1\cdots\tilde{z}_n]$, then full spark implies
\[
0
\neq\det([\tilde{z}_{j_1}\cdots\tilde{z}_{j_d}])
=\sum_{\sigma\in S_d}\operatorname{sign}(\sigma)\prod_{t\in[d]}(\tilde{z}_{j_t})_{\sigma(t)}.
\]
It follows that there exists a permutation $\sigma\in S_d$ such that 
\[
\prod_{t\in[d]}(\tilde{z}_{j_t})_{\sigma(t)}
\neq0.
\]
In addition, $(\tilde{z}_{j_t})_d\neq0$ for every $t>k$ by construction, and full spark implies $k<d$.
As such, we consider the component of \eqref{eq.find a nonvanishing term} with indices
\[
i_j
:=\left\{\begin{array}{cl}
\sigma(t)&\text{if $j=j_t$, $t\in[d]$}\\
d&\text{otherwise}
\end{array}\right.
\]
since
\[
\prod_{j\in[n]}(\tilde{z}_j)_{i_j}
=\bigg(\prod_{t\in[d]}(\tilde{z}_{j_t})_{\sigma(t)}\bigg)\bigg(\prod_{t\in\{d+1,\ldots,n\}}(\tilde{z}_{j_t})_d\bigg)
\neq0
\]
and
\[
\sum_{j\in[n]}w(i_j)
=\sum_{i\in[d]}w(i)+(n-d)w(d)
=(n-d)w(d)
<0,
\]
where the last inequality uses the assumption that $d<n$ and the fact that $w(d)<0$.
It follows that the $(i_1,\ldots,i_n)$ component of \eqref{eq.find a nonvanishing term} diverges as $t\to0$.

(b)
Suppose $Z\in\operatorname{S}(d,n)$ is a non-minimizing critical point of the frame potential.
By Corollary~\ref{cor.focp feature}, there exist $d',n'\in\mathbb{N}$ satisfying
\begin{equation}
\label{eq.redundancy inequality}
\frac{n'}{d'}
>\frac{n}{d}
\end{equation}
such that $n'$ of the columns of $Z$ form a tight frame for a $d'$-dimensional subspace of $\mathbb{C}^d$ that is orthogonal to the other $n-n'$ columns.
Explicitly, there is a unitary matrix $U\in\mathbb{C}^{d\times d}$ and a permutation matrix $P\in\mathbb{R}^{n\times n}$ such that
\[
UZP^*
=\left[\begin{array}{cc}
X&0\\
0&Y
\end{array}\right],
\]
where $X\in\mathbb{C}^{d'\times n'}$ satisfies $XX^*=\frac{n'}{d'}I_{d'}$.
Put
\[
D(t):=\left[\begin{array}{cc}
t^{d-d'}I_{d'}&0\\
0&t^{-d'}I_{d-d'}
\end{array}\right],
\]
and take the one-parameter subgroup $\lambda\colon\mathbb{C}^\times\to\operatorname{SL}(d)$ defined by $\lambda(t):=U^{-1}D(t)U$.
Denoting $Z=[z_1\cdots z_n]$ and $UZ=[\tilde{z}_1\cdots\tilde{z}_n]$, then
\begin{align*}
U\lambda(t)\cdot \tau(Z)
&=D(t)U\cdot (z_1\otimes\cdots\otimes z_n)\\
&=D(t)\tilde{z}_1\otimes\cdots\otimes D(t)\tilde{z}_n
=(t^{d-d'})^{n'}(t^{-d'})^{n-n'}\tilde{z}_1\otimes\cdots\otimes \tilde{z}_n.
\end{align*}
The exponent on $t$ simplifies to $n'd-d'n$, which is positive by \eqref{eq.redundancy inequality}.
This implies that $\lim_{t\to0}\lambda(t)\cdot\tau(Z)=0$, as claimed.
\smartqed
\end{proof}

To conclude, we note that in Section~16.3 of the monograph~\cite{Kirwan:online}, Kirwan reports that $\tau(Z)$ is semi-stable under the action of $\operatorname{SL}(d)$ precisely when
\[
\frac{|\{j:z_j\in S\}|}{\operatorname{dim}(S)}
\leq\frac{n}{d}
\]
for every nontrivial subspace $S$ of $\mathbb{C}^d$.
Interestingly, the identical condition appears in Barthe's theorem~\cite{HamiltonM:21} to characterize the real $Z\in\operatorname{S}(d,n)$ for which there exists $A\in\operatorname{GL}(d)$ such that normalizing the columns of $AZ$ gives a unit norm tight frame.
While the original proof of Barthe's theorem seems difficult to parse~\cite{Barthe:98}, this coincidence suggests a geometric proof of the complex case using the fact that gradient flow sends semi-stable points to unit norm tight frames.

\begin{acknowledgement}
Some of the ideas in this project were conceived at the 2018 MFO Mini-Workshop on Algebraic, Geometric, and Combinatorial Methods in Frame Theory.
DGM was partially supported by AFOSR FA9550-18-1-0107 and NSF DMS 1829955.
TN was partially supported by NSF DMS 2107808.
SV was partially supported by NSF DMS 2044349, the NSF--Simons Research Collaboration on the Mathematical and Scientific Foundations of Deep Learning (MoDL) (NSF DMS 2031985), and the TRIPODS Institute for the Foundations of Graph and Deep Learning at Johns Hopkins University.
CS was partially supported by NSF DMS 2107700.
\end{acknowledgement}

\end{document}